\title[Double calculus]{Double calculus}
\newtheorem{thm}{Theorem}
\newtheorem{prop}[thm]{Proposition}
\theoremstyle{definition}
\newtheorem{exa}[thm]{Example}
\newtheorem{rem}[thm]{Remark}
\newcommand{\D}{\Delta}
\newcommand{\N}{\mathbb{N}}
\newcommand{\R}{\mathbb{R}}
\setlist[itemize]{leftmargin=*}
\setlist[enumerate]{leftmargin=*}
\begin{document}

\author{Patrik Lundström}
\address{University West,
Department of Engineering Science, 
SE-461 86 Trollh\"{a}ttan, Sweden}

\email{patrik.lundstrom@hv.se}

\subjclass[2020]{26A24, 
26A42,
26B15 
}

\keywords{continuous function, differentiable function, 
Riemann integral, the fundamental theorem of calculus}

\begin{abstract}
We present a streamlined, slightly modified version, 
in the two-variable situation,
of a beautiful, but not so well known, theory by B\"{o}gel
\cite{bogel1934,bogel1935}, already from the 1930s,
on an alternative higher 
dimensional calculus of real functions, a 
\emph{double calculus}, which includes 
two-variable extensions of many 
classical results from single variable calculus,
such as Rolle's theorem, 
 Lagrange's mean value theorem,
Cauchy's mean value theorem, 
Fermat's extremum theorem, the first derivative test, 
and the first and second fundamental theorems of calculus.
\end{abstract}

\maketitle

\section{Introduction}

The motivation for this article comes from the 
trivial observation that the difference operator $\Delta$
is connected to both the the derivative and the integral.
To be more precise, if $f$ is a single variable real function,
then $f$ has derivative 
$f'(a) = \lim_{b \to a} \D_a^b(f)/(b-a)$ at $a$,
if the limit exists exists, where 
$\D_a^b(f)$ denotes $f(b)-f(a)$, and
if $f$ is continuous, then, by the 
fundamental theorem of calculus, 
$\int_a^b f(x) \ dx = \D_a^b(F)$, where $F$ is
a primitive function of $f$.
A naive interpretation of this connection 
is that it should, in theory, \linebreak be possible
to obtain higher-dimensional analogues of the 
fundamental \linebreak theorem of calculus
by first defining a suitable difference operator 
given by the multiple integral, over suitable domains, 
and then, by reverse \linebreak engineering, use this difference 
operator to define a derivative so that the 
fundamental theorem of calculus holds.

It may come as a surprise to some readers, and
it certainly did so for the author of the present article, 
that B\"{o}gel \cite{bogel1934,bogel1935} 
already in the
1930s showed that it is indeed possible, 
in any finite number of variables, to successfully \linebreak
carry out such a program.
Since this theory should be of high 
interest to \linebreak students and 
instructors of calculus in several variables,
we have in this \linebreak article produced an
accessible and streamlined, slightly modified version,
of this program in the case of two variables,
that is, a theory of \emph{double calculus}.
Note that our notation, definitions, 
results and proofs, 
at times, somewhat differs from the approach by B\"{o}gel  
\cite{bogel1934,bogel1935}. In the presentation
we therefore carefully point out whenever that happens.

The domains that Bögel considers are the natural 
two-dimensional \linebreak analogues of intervals, namely 
\emph{double intervals} $[a,b] = [a_1,b_1] \times[a_2,b_2]$
where $a = (a_1,a_2)$ and $b = (b_1,b_2)$ are points
in $\R^2$ with $a_1 < b_1$ and $a_2 < b_2$.
If $f : [a,b] \to \R$ is a continuous two-variable function,
then, by iterated \linebreak integration, it follows that
$\iint_{[a,b]} f(x) \ dx_1 dx_2 = 
F(b_1,b_2) - F(b_1,a_2) - F(a_1,b_2) + F(a_1,a_2)$
where $F$ is a two-variable function with the 
property that the iterated partial derivatives 
$F_{12}$ and $F_{21}$ exist and are equal to $f$ on $[a,b]$.
Therefore he defines the \emph{double difference operator} by 
$
\D_a^b(f) = f(b_1,b_2) - f(b_1,a_2) - 
f(a_1,b_2) + f(a_1,a_2)
$
and he defines the
\emph{double} \linebreak \emph{derivative} of $f$ at $a$ by 
$f'(a) = \lim_{x_1 \to a_1; x_2 \to a_2} 
\D_a^b(f)/( (b_1-a_1) (b_2 - a_2) )$
when it exists. 

It turns out that the class of double differentiable 
functions thus obtained is much richer 
than it's one-dimensional counterpart.
In fact, the class
of double differentiable functions contains many 
examples of functions that are not partially 
differentiable and, in some cases, not even 
continuous. Indeed, if we pick \emph{any} functions
$g,h : \R \to \R$ and define $f : \R^2 \to \R$
by $f(x) = g(x_1) + h(x_2)$ for $x \in \R^2$, then 
$\D_a^b(f) = 0$ for all $a,b \in \R^2$ and hence
$f$ is \emph{double constant}, that is $f$ is 
double differentiable with $f' = 0$.
This has the unpleasant consequence that double 
differentiable functions 
may be \emph{unbounded} on compact subsets of $\R^2$.
Therefore, we can not expect to find a two-dimensional 
version of the Weierstrass extreme value theorem to hold
within this framework. 

Here is a detailed outline of this article.

In Section \ref{sec:doublefunctions}, we
fix the notation concerning double intervals
and double functions. We also show some 
elementary results that we need in subsequent 
sections.

In Section \ref{sec:doublecontinuity},
we define the class of double
continuous functions. We show that 
a function which is double continuous on an interval
is automatically \linebreak \emph{globally} double continuous
on that double interval. 
We also show a \linebreak
continuity result concerning the double 
difference map that we need in the following sections.

In Section \ref{sec:doubledifferentiablility}, 
we define (signed) double limits and
(signed) double \linebreak derivatives. 
In analogue with the single variable situation, 
we show that double differentiable functions
are double continuous.
Then we show double versions of Rolle's theorem,
the mean value theorem, Fermat's theorem 
and the first derivative test.
At the end of this section, 
we introduce double primitive functions.

In Section \ref{sec:doubleintegrability},
we define the double Newton integral.
Using the double mean value theorem, we 
obtain a mean value theorem for double 
Newton integrals.
After that, we connect the double Newton integral
to the Riemann double integral in 
the first and second double fundamental 
theorems of calculus.
At the end of this section, we introduce
improper double Newton integrals.
We also discuss some examples of double 
integrals over
non-rectangular regions.

In Section \ref{sec:triplecalculus}, we
discuss B\"{o}gels extensions of the results in this 
article to higher dimensions, that is
triple calculus, quadruple calculus and beyond.
We also discuss the possibility for a  
higher-dimensional version
of Schwarz's theorem and 
Darboux's theorem.

\section{Double functions}\label{sec:doublefunctions}

In this section, we
fix the notation concerning double intervals
and double functions. We also show some 
elementary results that we need in subsequent 
sections (see Proposition \ref{propsplit} and
Proposition \ref{subdivision}).

Let $\N$ denote the set of positive integers.
We let $\R$ denote the set of real numbers and
we put $\R^2 := \R \times \R$.
Let $\R_+$ and $\R_-$ denote the set
of positive real numbers and the set of negative
real numbers respectively;
we put $\R_{++}^2 := \R_+ \times \R_+$,
$\R_{+-}^2 := \R_+ \times \R_-$,
$\R_{-+}^2 := \R_- \times \R_+$ and
$\R_{--}^2 := \R_- \times \R_-$.

Suppose that $a \in \R^2$. We write $a = (a_1,a_2)$ 
for $a_1,a_2 \in \R$.
More generally, if $A \subseteq \R^2$, then we put
$A_1 = \{ a_1 \mid a \in A \}$ and
$A_2 = \{ a_2 \mid a \in A \}$.
Suppose that $b \in \R^2$. Then we write 
$a \sim b$ if $a_1 = b_1$ or $a_2 = b_2$; 
$a \nsim b$ if $a_1 \neq b_1$ and $a_2 \neq b_2$;
$a < b$ if $a_1 < b_1$ and $a_2 < b_2$;
$a \leq b$ if $a_1 \leq b_1$ and $a_2 \leq b_2$.

By a \emph{double interval} we mean a subset $I$ of $\R^2$
of the form $I_1 \times I_2$ where $I_1$ and $I_2$
are intervals in $\R$.
If $I_1$ and $I_2$ are open (closed, compact),
then we say that $I$ is \emph{double open 
(double closed, double compact)}.
If $a,b \in \R^2$ and $a \leq b$, then we put
$(a,b) = \{ x \in \R^2 \mid a < x < b \}$
and
$[a,b] = \{ x \in \R^2 \mid a \leq x \leq b \}$.

By a \emph{double $\delta$-neighbourhood of $a$} 
we mean a set of the form
$
D(a,\delta) := (a-\delta,a+\delta)
$
for some $\delta \in \R_{++}^2$.
The \emph{signed} double $\delta$-neighbourhoods
$D_{++}(a,\delta)$, $D_{-+}(a,\delta)$, 
$D_{--}(a,\delta)$ and $D_{+-}(a,\delta)$ are
defined as the intersection of $D(a,\delta)$ with, 
respectively, the closure of the first, the second, 
the third and the fourth quadrant in $\R^2$.
By a \emph{double punctured $\delta$-neighbourhood of $a$}
we mean a set of the form
$
P(a,\delta) := \{ x \in D(a,\delta) \mid x \nsim a \}
$
for some $\delta \in \R_{++}^2$.
The \emph{signed} double punctured $\delta$-neighbourhoods
$P_{++}(a,\delta)$, $P_{-+}(a,\delta)$, 
$P_{--}(a,\delta)$ and $P_{+-}(a,\delta)$ are
defined as the intersection of $D(a,\delta)$ with, 
respectively, the first, the second, 
the third and the fourth quadrant in $\R^2$.

Let $f$ denote a real-valued function
with a domain $D(f)$ which is a subset of $\R^2$.
In that case, we say that $f$ is a 
\emph{double function}.
If $a,b \in \R^2$ are chosen so that
$(b_1,b_2),(b_1,a_2),(a_1,b_2),(a_1,a_2) \in D(f)$, 
then we define the 
\emph{double difference of $f$ from $a$ to $b$} 
as the real number:
\[
\D_a^b(f) := f(b_1,b_2) - f(b_1,a_2) - f(a_1,b_2) + f(a_1,a_2).
\]
If $I$ is a double interval contained in $D(f)$, then
we say that $f$ is  \emph{double constant} on $I$
if $\D_a^b(f) = 0$ for all $a,b \in I$.

\begin{prop}\label{propsplit}
Suppose that $f$ is a double function which is defined
on a double interval $I$.
Then $f$ is double constant on $I$
if and only if
there are functions $g : I_1 \to \R$ and 
$h : I_2 \to \R$ with
$f(x) = g(x_1) + h(x_2)$ for $x \in I$.
\end{prop}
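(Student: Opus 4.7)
The plan is to prove the two directions separately, with the reverse implication being immediate from direct computation and the forward implication relying on a basepoint trick.

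First I would dispatch the easy direction. Assume $f(x) = g(x_1) + h(x_2)$ on $I$. Then for any $a,b \in I$, when I expand $\D_a^b(f)$ using the definition, the four terms $g(b_1), -g(b_1), -g(a_1), g(a_1)$ cancel in pairs, and similarly for the $h$-values at $b_2$ and $a_2$. So $\D_a^b(f) = 0$, showing $f$ is double constant.

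For the forward direction, the idea is to fix a basepoint $c = (c_1, c_2) \in I$ (which exists since $I = I_1 \times I_2$ with both factors nonempty) and exploit the vanishing of $\D_c^x(f)$ for arbitrary $x \in I$. Unpacking the definition,
\[
0 = \D_c^x(f) = f(x_1,x_2) - f(x_1,c_2) - f(c_1,x_2) + f(c_1,c_2),
\]
which rearranges to
\[
f(x_1,x_2) = \bigl( f(x_1,c_2) - f(c_1,c_2) \bigr) + f(c_1,x_2).
\]
So the natural choice is $g(x_1) := f(x_1,c_2) - f(c_1,c_2)$ and $h(x_2) := f(c_1,x_2)$, both of which are well-defined since the relevant slices $I_1 \times \{c_2\}$ and $\{c_1\} \times I_2$ lie inside $I$. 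Then $f(x) = g(x_1) + h(x_2)$ for every $x \in I$, as required.

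I do not expect any real obstacle: the forward direction reduces entirely to recognizing that $\D_c^x(f) = 0$ is already exactly the separability identity in disguise. The only thing to be a bit careful about is making sure the basepoint $c$ lies in $I$ and that the horizontal and vertical slices through $c$ stay in $I$, but this is automatic from the product structure $I = I_1 \times I_2$.
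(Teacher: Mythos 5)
Your proof is correct and follows essentially the same approach as the paper: the reverse direction is the same direct cancellation, and the forward direction uses the same basepoint trick of setting $\D_c^x(f)=0$ to force the decomposition, with $g$ and $h$ defined from horizontal and vertical slices through $c$. The only (cosmetic) difference is that you absorb the constant $-f(c_1,c_2)$ into $g$ while the paper puts it into $h$.
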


\begin{proof}
Suppose $f$ is double constant on $I$.
Take $a \in I$.
Define $g : I_1 \to \R$ by $g(s) = f(s,a_2)$ 
for $s \in I_1$
and define $h : I_2 \to \R$ by 
$h(t) = f(a_1,t) - f(a_1,a_2)$ for $t \in I_2$.
If $x \in I$, then:
\[
\begin{array}{rcl} 
f(x_1,x_2) & = &  f(x) - 0 
     = f(x_1,x_2) - \D_a^x(f) \\[5pt]
     &=& f(x_1,x_2) - f(x_1,x_2) - f(a_1,a_2) + f(x_1,a_2) + f(a_1,x_2) \\[5pt]  
     &=& g(x_1) + h(x_2).
\end{array}
\]
Now suppose that there are functions $g : I_1 \to \R$ and 
$h : I_2 \to \R$ with $f(x) = g(x_1) + h(x_2)$ for $x \in I$. 
If $a,b \in I$, then:
\[
\begin{array}{rcl}
\D_a^b(f) & = & f(b_1,b_2) + f(a_1,a_2) - 
f(b_1,a_2) - f(a_1,b_2) \\[5pt]
&=&  g(b_1) + h(b_2) + g(a_1) + h(a_2) \\[5pt]
&-& \left( g(b_1) + h(a_2) + g(a_1) + h(b_2) \right)
\ = \ 0.  
\end{array}
\]
Thus, $f$ is double constant on $I$.
\end{proof}

For future reference, we record some 
properties of the double difference.

\begin{prop}\label{subdivision}
Suppose that $f$ is a double function which is defined on
a double interval containing the points $a$, $b$ and $x$.
Then:
\begin{enumerate}[label={\rm (\alph*)}]

\item  $\D_a^a(f) = 0$; \ $\D_a^b(f) = \D_b^a(f)$; \
$\D_a^b(f) = -\D_{(a_1,b_2)}^{(b_1,a_2)}(f)$;

\item $\D_a^b(f) = \D_a^{(x_1,b_2)}(f) + 
\D_{(x_1,a_2)}^b(f)$;

\item $\D_a^b(f) = \D_a^{(b_1,x_2)}(f) + 
\D_{(a_1,x_2)}^b(f)$;

\item $\D_a^b(f) = \D_a^x(f) + \D_x^b(f) + 
\D_{(a_1,x_2)}^{(x_1,b_2)}(f) +  
\D_{(x_1,a_2)}^{(b_1,x_2)}(f)$.

\end{enumerate}
\end{prop}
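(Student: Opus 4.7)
The plan is to prove every identity by direct algebraic expansion of the defining formula
\[
\D_a^b(f) = f(b_1,b_2) - f(b_1,a_2) - f(a_1,b_2) + f(a_1,a_2),
\]
and to observe that parts (b), (c) are mirror images of each other, while (d) is a consequence of applying (b) and (c) in succession. No analytic input is needed; the whole proposition is bookkeeping.

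For part (a), I would simply substitute $b = a$ to get the first identity, swap the roles of $a$ and $b$ to get the second (noting that the four corner values are permuted in a way that leaves the signed sum unchanged), and, for the third identity, expand $\D_{(a_1,b_2)}^{(b_1,a_2)}(f)$ directly: the ``upper'' corner is $(b_1,a_2)$ and the ``lower'' corner is $(a_1,b_2)$, so the resulting four terms are precisely the four terms of $\D_a^b(f)$ with all signs flipped.

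For parts (b) and (c), I would add the two right-hand side expressions and observe the telescoping: in (b), the terms $f(x_1,b_2)$ and $f(x_1,a_2)$ cancel between $\D_a^{(x_1,b_2)}(f)$ and $\D_{(x_1,a_2)}^b(f)$, leaving exactly the four corner values that define $\D_a^b(f)$. Part (c) is identical after swapping the roles of the two coordinates.

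Finally, for (d), rather than expand all four double differences, I would derive it from (b) and (c). First I would apply (b) to split $\D_a^b(f)$ along the vertical line through $x_1$, obtaining $\D_a^{(x_1,b_2)}(f) + \D_{(x_1,a_2)}^b(f)$. Then I would apply (c) to each of these two terms, splitting each along the horizontal line through $x_2$; this yields the four summands in (d). The only thing to be careful about is the order of the ``lower'' and ``upper'' corners in each $\D$, which is handled automatically as long as one feeds (c) the correct corners. I don't expect any real obstacle here: the main pitfall is purely notational, namely keeping track of which coordinate of which point appears where, but since each identity is a finite sum of finitely many corner values, a direct tabulation of coefficients confirms everything.
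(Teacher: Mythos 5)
Your proposal is correct, and it takes essentially the same route as the paper, which simply observes that all four identities follow immediately from expanding the definition of $\D$. Your observation that (d) can be obtained by applying (b) and then (c) to each of the resulting halves is a tidy refinement, but it is the same direct verification in substance.
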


\begin{proof}
These properties follow immediately from 
the definition of $\D$.
\end{proof}

\section{Double continuity}\label{sec:doublecontinuity}

In this section, we define the class of double
continuous functions and the class of globally
double continuous functions. 
We show that 
a function which is double continuous on an interval
is automatically \emph{globally} double continuous
on that double interval 
(see Proposition \ref{propgloballydouble}). 
We also show a 
continuity result (see Proposition \ref{propcontinuity}), 
concerning the double 
difference map that we need in the next section.

Let $a \in \R^2$ and suppose that $f$ is a double function. 
We say that $f$ is \emph{double continuous at $a$} 
if there is a double open interval $I$
with $a \in I \subseteq D(f)$ such that
$\forall x_1 \in I_1$ 
$\lim_{x_2 \to a_2} \D_a^x(f) = 0$ and
$\forall x_2 \in I_2$ 
$\lim_{x_1 \to a_1} \D_a^x(f) = 0$.

Now we define \emph{signed} double 
continuity at a point. 
We say that $f$ is \emph{$+-$double continuous at $a$} 
if there is $\delta \in \R_{++}^2$
with $a \in I:=D_{+-}(a,\delta) \subseteq D(f)$ such that
$\forall x_1 \in I_1$ 
$\lim_{x_2 \to a_2^-} \D_a^x(f) = 0$ and
$\forall x_2 \in I_2$ 
$\lim_{x_1 \to a_1^+} \D_a^x(f) = 0$.
Analogously, $++$double continuity, 
$-+$double continuity and 
$--$double continuity are defined.

Let $I$ be a double interval 
such that $I \subseteq D(f)$.
If $f$ is double continuous at every $a \in I$,
then we say that $f$ is \emph{double continuous on $I$}.
In this definition it is understood that if $a$
is a boundary point of $I$, then 
by double continuity at $a$ 
we mean this in the sense of the 
signed double continuity defined above.
Namely, suppose, for instance, that 
$I = (a_1 , b_1] \times (a_2 , b_2]$.
By double continuity at the point $b$, we mean 
$--$double continuity, and by
double continuity at a point $(b_1,t)$, where $a_2 < t < b_2$,
we mean both $-+$double continuity and 
$--$double continuity.

\begin{prop}\label{contaimpliesdconta}
Suppose that $f$ is a double function. 
Let $I$ be a double interval.

\begin{enumerate}[label={\rm (\alph*)}]

\item If $f$ is continuous on $I$,
then $f$ is double continuous on $I$.

\item If $f$ is double constant on $I$, then $f$ is
double continuous on $I$.

\end{enumerate}
\end{prop}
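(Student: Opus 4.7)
The plan is to reduce both parts directly to the definition by expanding
\[
\D_a^x(f) = f(x_1,x_2) - f(x_1,a_2) - f(a_1,x_2) + f(a_1,a_2)
\]
and tracking each of the four terms as one coordinate is varied while the other is held fixed. Once this expansion is in front of us, part (a) is a consequence of the standard continuity hypothesis, and part (b) is an immediate consequence of the definition of double constancy.

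For part (a), fix $a \in I$ and choose a double open interval $J$ with $a \in J \subseteq I$ (at a boundary point of $I$ we choose $J$ to be the appropriate signed double neighbourhood, so that $J \cap I$ captures the directions in which one-sided limits are taken). For each fixed $x_1 \in J_1$, continuity of $f$ at $(x_1,a_2)$ gives $f(x_1,x_2) \to f(x_1,a_2)$ as $x_2 \to a_2$, and continuity at $(a_1,a_2)$ gives $f(a_1,x_2) \to f(a_1,a_2)$; the two middle terms $f(x_1,a_2)$ and $f(a_1,a_2)$ do not depend on $x_2$. Summing signs, the limit of $\D_a^x(f)$ as $x_2 \to a_2$ is $0$. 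The other limit (fix $x_2 \in J_2$, let $x_1 \to a_1$) is handled identically, and the one-sided versions needed at boundary points follow because ordinary continuity at such a point already restricts to one-sided continuity in each coordinate along the admissible quadrant.

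For part (b), the statement $\D_a^b(f) = 0$ for all $a,b \in I$ directly gives $\D_a^x(f) = 0$ for every relevant $x$, so the limits in the definition of double continuity are constantly $0$, and any of the signed variants hold automatically. Alternatively one may invoke Proposition \ref{propsplit} to write $f(x) = g(x_1) + h(x_2)$ and recompute $\D_a^x(f) = 0$ term by term, but this is overkill.

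I do not expect a real obstacle here; the only thing to be careful about is the bookkeeping of the signed versions at boundary points, which amounts to checking that in each of the four cases ($++$, $+-$, $-+$, $--$) the ordinary continuity of $f$ at that boundary point supplies precisely the one-sided limits required by the corresponding signed double continuity. Since this reduces to an inspection of which quadrant of $D(a,\delta)$ lies inside $I$, it is routine.
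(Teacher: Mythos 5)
Your argument is correct and is exactly the expansion the paper is declaring obvious when it dismisses the proof with ``This is clear'': for (a), fix one coordinate, expand the four-term definition of $\D_a^x(f)$, and use continuity of $f$ at $(x_1,a_2)$ and at $(a_1,a_2)$ (respectively $(a_1,x_2)$ and $(a_1,a_2)$) to see that the terms cancel in the limit; for (b), $\D_a^x(f)\equiv 0$ makes the required limits trivial. Your caution about the signed variants at boundary points is the right bookkeeping and is consistent with the paper's convention that double continuity on a non-open interval is interpreted via the appropriate signed notions at boundary points.
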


\begin{proof}
This is clear.
\end{proof}

\begin{exa}\label{ex:doubleconstant}
The class of double continuous functions on $\R^2$
contains many examples of functions which are not
continuous. Indeed, suppose that $g$ and $h$ are
functions $\R \to \R$ such that $g$ is continuous but
$h$ is not. Put $f(x) = g(x_1) + h(x_2)$ for $x \in \R^2$. 
Then, clearly, $f$ is not continuous.
However, by Proposition \ref{propsplit} and 
Proposition \ref{contaimpliesdconta}(b), $f$
is double continuous.
\end{exa}

\begin{exa}\label{ex:x1x2}
Define the double function $f$ by
$f(x) = x_1^{x_2} \cdot x_2^{x_1}$,
for $x > 0$, and $f(x) = 0$, if $x \leq 0$.
Note that $f$ is not double 
constant since $\D_0^x(f) = f(x) \neq 0$ for all 
$x > 0$. Since:
\[
f(t,t) = t^t \cdot t^t = 
t^{2t} = e^{2t \ln(t)} \to e^0 = 1 \neq 0 = f(0,0),
\]
as $t \to 0^+$, it follows that $f$ is not 
continuous at $0$. 
However, $f$ is double continuous at $0$.
In fact, if $x_1 > 0$, then: 
\[
\D_0^x(f) =  f(x) = x_1^{x_2} \cdot x_2^{x_1} 
\to 1 \cdot 0 = 0 = f(0,0), 
\]
as $x_2 \to 0^+$; if $x_2 > 0$, then:
\[
\D_0^x(f) = f(x) = x_1^{x_2} \cdot x_2^{x_1}
\to 0 \cdot 1 = 0 = f(0,0), 
\]
as $x_1 \to 0^+$.
\end{exa}

\begin{rem}
Our definition of double continuity
is different from Bögel's \cite{bogel1934} 
definition of this concept. Indeed, 
he defines $f$ to be continuous at $a$ if
$\lim_{x \to a} \D_a^x(f) = 0$.
Example \ref{ex:x1x2} shows that there are double continuous functions in our sense that are not double continuous in the sense of Bögel. On the other hand, the function 
$f(x) = x_1^2 + x_2^2$, for $x \nsim 0$,
and $f(x) = 0$, for $x \sim 0$, 
is continuous at 0 in the sense of Bögel but not in our sense.
We have chosen to define double continuity as weak 
as possible but so that it still is implied by double 
differentiability (see Proposition \ref{ddiffimpliesdcont}). 
\end{rem}

Suppose that $f$ is a double function. 
Let $I$ be a double interval such that $I \subseteq D(f)$.
We say that $f$ is {\it globally double continuous on $I$} if
$\forall c,d \in I_1$ $\forall e \in I_2$ 
$\lim_{x_2 \to e; x_2 \in I_2} 
\D_{(d,e)}^{(c,x_2)}(f) = 0$, and
$\forall c,d \in I_2$ $\forall e \in I_1$ 
$\lim_{x_1 \to e; x_1 \in I_1} 
\D_{(e,d)}^{(x_1,c)}(f)$ $= 0$.
To the knowledge of the author of the 
present article, Bögel \cite{bogel1934,bogel1935}
does not define any concept resembling 
global double continuity.

\begin{prop}\label{propgloballydouble}
Let $I$ be a double interval and suppose that 
$f$ is a double function such that $I \subseteq D(f)$.
Then $f$ is double continuous on $I$ if and only if
$f$ is globally double continuous on $I$.
\end{prop}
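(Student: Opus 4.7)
I would prove the two directions separately; the forward direction (local $\Rightarrow$ global) carries the real content.

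For the easy direction, global $\Rightarrow$ local double continuity, fix $a \in I$ and choose a double open interval (or the signed one-sided analogue if $a$ lies on the boundary of $I$) $I'$ with $a \in I' \subseteq I$. Specializing the first global condition by setting $d := a_1$ and $e := a_2$ yields $\lim_{x_2 \to a_2} \D_a^{(c, x_2)}(f) = 0$ for every $c \in I'_1$, which is exactly one of the two conditions in the local definition at $a$; the other follows from the horizontal global condition. The signed versions at boundary points work without modification, since the global limit already has the restriction $x_2 \in I_2$ built in.

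For the converse, I would argue by connectedness along a horizontal slice. Fix $c, d \in I_1$ and $e \in I_2$. Using Proposition~\ref{subdivision}(a) to swap $c$ and $d$ if needed, I may assume $c \leq d$; the case $c = d$ is trivial, so assume $c < d$. Set
\[
T := \{\, t \in [c, d] : \lim_{x_2 \to e; \, x_2 \in I_2} \D_{(t, e)}^{(c, x_2)}(f) = 0 \,\}.
\]
Since $\D_{(c, e)}^{(c, x_2)}(f) \equiv 0$, we have $c \in T$. The plan is to show that $T$ is both open and closed in the connected set $[c, d]$, so $T = [c, d]$ and in particular $d \in T$, which is the first (vertical) half of global double continuity. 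The second (horizontal) half follows by the symmetric argument with the roles of the two coordinates interchanged.

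The key computational tool is the cocycle-type identity
\[
\D_{(t', e)}^{(c, x_2)}(f) = \D_{(t, e)}^{(c, x_2)}(f) - \D_{(t, e)}^{(t', x_2)}(f),
\]
which follows from Proposition~\ref{subdivision}(b) with intermediate first-coordinate $t$, together with the sign rule from part~(a). Given $t \in T$, local double continuity at $(t, e)$ provides a neighborhood $J_1 \subseteq I_1$ of $t$ on which $\lim_{x_2 \to e} \D_{(t, e)}^{(t', x_2)}(f) = 0$ for every $t' \in J_1$; combined with $t \in T$ and the identity, this forces $t' \in T$ for every $t' \in J_1 \cap [c, d]$, giving openness. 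For closedness, if $t_n \to t$ with $t_n \in T$, then for large $n$ we have $t_n$ in the $J_1$ provided by local double continuity at $(t, e)$, and rewriting $\D_{(t, e)}^{(c, x_2)}(f) = \D_{(t_n, e)}^{(c, x_2)}(f) + \D_{(t, e)}^{(t_n, x_2)}(f)$ via the same identity shows that both summands tend to $0$. The only real obstacle is bookkeeping: one must apply the identity with the correct intermediate coordinate and sign, and at boundary points use the appropriate signed version of local double continuity, whose one-sided neighborhoods and limits match the restriction $x_2 \in I_2$ in the global definition so that no new phenomenon arises.
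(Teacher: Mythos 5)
Your proof is correct, but it follows a genuinely different route from the paper. The paper proves local $\Rightarrow$ global via compactness in the Heine--Borel form: for every $z \in [c,d]$ it extracts from local double continuity at $(z,e)$ an $\epsilon_z$-neighbourhood in which $x_2 \mapsto f(y,x_2)-f(z,x_2)$ is continuous at $e$, takes a finite subcover of $[c,d]$, orders the centres into a partition $c = z_0 < z_1 < \cdots < z_n = d$, and passes the limit through the telescoping sum $f(d,x_2)-f(c,x_2) = \sum_{i=1}^{n}\bigl(f(z_i,x_2)-f(z_{i-1},x_2)\bigr)$. You instead use compactness in the connectedness form: you define $T \subseteq [c,d]$ as the set where the global condition holds with base $(t,e)$, note $c \in T$, and show $T$ is relatively open and closed. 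The two organizations rely on the same ingredients --- local double continuity gives the small-scale control, and the cocycle identity $\D_{(t',e)}^{(c,x_2)}(f) = \D_{(t,e)}^{(c,x_2)}(f) - \D_{(t,e)}^{(t',x_2)}(f)$ (a rearrangement of Proposition~\ref{subdivision}(b)) does the aggregation that the paper's telescoping sum does explicitly. What yours buys: you avoid constructing and ordering an explicit finite partition, and the boundary bookkeeping is automatic because $T$ lives in the relative topology of $[c,d]$, where the one-sided neighbourhoods supplied by signed double continuity are exactly relative neighbourhoods. What the paper's buys: the finite telescoping sum makes the mechanism visible in one displayed line and is slightly more concrete for a reader unused to open-closed arguments. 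One small caution in your writeup: in the closedness step you should make explicit that you fix a \emph{single} sufficiently large index $n_0$ and then let $x_2 \to e$ with $n_0$ held fixed; as written it could be misread as a double limit.
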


\begin{proof}
 The ``if'' statement is clear.
Now we show the ``only if'' statement.
Suppose that $f$ is double continuous on 
$I$. Take $a,b \in I$ with $a<b$.
Take $e \in [a_2,b_2]$ and $c,d \in [a_1,b_1]$ with $c < d$.
Put $J = [c,d]$. The case when $d < c$ 
is symmetrical and is therefore left to the reader.
For all $z \in J$ choose $\epsilon_z > 0$ such that 
for all $y \in (z - \epsilon_z , z + \epsilon_z)$ the map 
$[a_2,b_2] \ni x_2 \mapsto f(y,x_2)-f(c,x_2)$
is continuous at $x_2 = e$. Since the open intervals 
$( z - \epsilon_z/2 , z + \epsilon_z/2 )$, for $z \in J$, 
is an open cover of
the compact interval $J$ we can choose a finite subcover 
$\{ ( z_i - \epsilon_{z_i}/2 , z_i + 
\epsilon_{z_i}/2 ) \}_{i=0}^n$ of $J$. 
We may assume that 
$a_1 = z_0 < z_1 < \cdots < z_{n-1} < z_n = x_1$. 
We may also assume that for all $i,j \in \{ 0,\ldots,n \}$
if $( z_i - \epsilon_{z_i}/2 , z_i + \epsilon_{z_i}/2 ) 
\subseteq ( z_j - \epsilon_{z_j}/2 , z_j + \epsilon_{z_j}/2 )$,
 then $i=j$.
It follows that:
\[
\begin{array}{rcl}
f(x_1,x_2)-f(a_1,x_2) &=& 
\sum_{i=1}^n f(z_i,x_2) - f(z_{i-1},x_2) \\[5pt]
 &\to&  \sum_{i=1}^n f(z_i,a_2) - f(z_{i-1},a_2) \\[5pt]
 &=&  f(x_1,a_2) - f(a_1,a_2),
\end{array}
\]
as $x_2 \to a_2$,
since for all $i \in \{ 1,\ldots,n \}$, 
$z_{i-1} \in ( z_i - \epsilon_{z_i} , z_i + \epsilon_{z_i} )$
or $z_i \in ( z_{i-1} - \epsilon_{z_{i-1}} , z_{i-1} + \epsilon_{z_{i-1}} )$.
The calculation involving the second limit 
is completely analogous to the above calculation 
and is left to the reader.
\end{proof}

The next result is \cite[Satz 12]{bogel1934}.
To prove this we use our notion of global 
double continuity, whereas B\"{o}gel loc. cit. resorts to an
ad hoc argument (although in a more general context).

\begin{prop}\label{propcontinuity}
Let $a,b,h \in \R^2$ where $a < b$. 
Suppose that $f : [a,b] \to \R$ is a double 
continuous function.
Then the map $x \mapsto \D_x^{x+h}$ is continuous 
at all $x \in (a,b)$ for which $x+h \in (a,b)$.
\end{prop}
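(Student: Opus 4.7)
Set $\phi(z) := \D_z^{z+h}(f)$. The plan is to write the double difference $\D_x^y(\phi)$ as a signed sum of four double differences of $f$, each having one pair of corners at heights differing by $|y_2-x_2|$ and its two vertical edges separated horizontally by $|y_1-x_1|$, and then to appeal to the global double continuity of $f$ furnished by Proposition \ref{propgloballydouble}.

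The core of the argument is the algebraic identity
\[
\D_x^y(\phi) \,=\, \D_x^y(f) \,+\, \D_{x+h}^{y+h}(f) \,-\, \D_{(x_1+h_1,\,x_2)}^{(y_1+h_1,\,y_2)}(f) \,-\, \D_{(x_1,\,x_2+h_2)}^{(y_1,\,y_2+h_2)}(f).
\]
I would prove it by recording two formal properties of $\D$: it is $\R$-linear in its argument, and it satisfies the translation rule $\D_x^y\bigl(f(\,\cdot\, + k)\bigr) = \D_{x+k}^{y+k}(f)$ for every $k \in \R^2$. Both are immediate from the definition of $\D_a^b$. Writing
\[
\phi(z) = f(z) + f(z+h) - f(z_1+h_1, z_2) - f(z_1, z_2+h_2)
\]
and applying $\D_x^y$ termwise then yields the displayed identity. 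A brute-force expansion of all sixteen $f$-values with signs also works, but is the tedious route.

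With the identity in hand, fix $y_1$ in a small neighborhood of $x_1$ inside the relevant double open interval and let $y_2 \to x_2$. Each of the four terms on the right has its pairs of corners at two heights differing by $|y_2-x_2|$, so Proposition \ref{propgloballydouble}, applied at the four base points $x$, $x+h$, $(x_1+h_1, x_2)$, and $(x_1, x_2+h_2)$ (all lying in $(a,b)$ by the hypothesis $x, x+h \in (a,b)$), shows that each term tends to $0$. Hence $\D_x^y(\phi) \to 0$, and a symmetric argument with $y_2$ fixed and $y_1 \to x_1$ handles the other direction, giving the required continuity of $\phi$ at $x$ in the sense used throughout the paper. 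The only real obstacle is the identity itself; once the translation rule is in place it drops out automatically.
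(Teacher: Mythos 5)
Your algebraic machinery is clean and the identity you derive is correct, but you have applied it to the wrong quantity, and the conclusion you draw does not follow. Showing that $\D_x^y(\phi) \to 0$ (separately as $y_1 \to x_1$ and as $y_2 \to x_2$) is precisely the definition of $\phi$ being \emph{double continuous} at $x$; the proposition asserts that $\phi$ is \emph{continuous} at $x$. The paper goes out of its way to emphasize, starting with Example~\ref{ex:doubleconstant}, that these are very different notions: any double constant function $\phi(y) = g(y_1) + h(y_2)$ with $h$ wildly discontinuous has $\D_x^y(\phi) \equiv 0$, so your argument would certify such a $\phi$ as ``continuous'' when it plainly is not. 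Moreover, in the only place Proposition~\ref{propcontinuity} is used (the proof of the double Rolle theorem), $\phi$ is fed into Proposition~\ref{propintermeiate}, which requires ordinary continuity to apply the intermediate value theorem along a line segment. So double continuity of $\phi$ is genuinely too weak.

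The fix is to control an ordinary first difference of $\phi$ rather than the double difference. Using the same expansion $\phi(z) = f(z) + f(z+h) - f(z_1+h_1,z_2) - f(z_1,z_2+h_2)$, one computes
\[
\phi(y_1,x_2) - \phi(x_1,x_2) \;=\; \D_{(x_1+h_1,\,x_2)}^{(y_1+h_1,\,x_2+h_2)}(f) \;-\; \D_{(x_1,\,x_2)}^{(y_1,\,x_2+h_2)}(f),
\]
and each term on the right tends to $0$ as $y_1 \to x_1$ by global double continuity (Proposition~\ref{propgloballydouble}); similarly for $\phi(x_1,y_2) - \phi(x_1,x_2)$ as $y_2 \to x_2$. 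That is exactly the computation the paper carries out (written there with $a$ playing the role of your $x$ and $x$ playing the role of your $y$), and it yields \emph{separate} continuity of $\phi$, which is the claim the paper actually proves. Your linearity-plus-translation bookkeeping is a nice way to organize either computation, but it must be applied to $\phi(y_1,x_2) - \phi(x_1,x_2)$, not to $\D_x^y(\phi)$.
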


\begin{proof}
It is enough to show that the map $x \mapsto \D_x^{x+h}$ 
is continuous separately in the variables $x_1$ and $x_2$.
By Proposition \ref{propgloballydouble} it follows that:
\[
\begin{array}{rcl}
\D_x^{x+h}(f) - \D_{(a_1,x_2)}^{(a_1+h_1,x_2+h_2)} 
&=& f(x_1+h_1,x_2+h_2)-f(x_1+h_1,x_2) \\[5pt]
& & - \ ( f(a_1+h_1,x_2+h_2) - f(a_1+h_1,x_2) ) \\[5pt]
& & - \ ( f(x_1,x_2+h_2) - f(x_1,x_2) ) \\[5pt]
& & + \ f(a_1,x_2+h_2) - f(a_1,x_2) \ \to \ 0, 
\end{array}
\]
as $x_1 \to a_1$, and:
\[
\begin{array}{rcl}
\D_x^{x+h}(f) - \D_{(x_1,a_2)}^{(x_1+h_1,a_2+h_2)}
&=& f(x_1+h_1,x_2+h_2)-f(x_1+h_1,x_2) \\[5pt]
& & - \ ( f(x_1+h_1,a_2+h_2) - f(x_1+h_1,a_2) ) \\[5pt]
& &  - \ ( f(x_1,x_2+h_2) - f(x_1,x_2) ) \\[5pt]
& &  + \ f(x_1,a_2+h_2) - f(x_1,a_2) \ \to \ 0, 
\end{array}
\]
as $x_2 \to a_2$.
\end{proof}

\begin{prop}\label{propintermeiate}
Let $a,b \in \R^2$ where $a < b$.
Suppose that $f$ is a double function 
which is continuous on $[a,b]$.
Let $D$ denote the smallest closed interval in $\R$
containing $f(a_1,a_2)$, $f(b_1,b_2)$, $f(a_1,b_2)$ 
and $f(b_1,a_2)$.
If $d$ is an interior point in $D$, 
then there exists $c \in (a,b)$ with $f(c) = d$.
\end{prop}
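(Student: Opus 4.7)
The plan is to reduce the statement to the one-variable intermediate value theorem applied to $f$ restricted to a suitably chosen line segment whose interior lies in $(a,b)$.

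First, observe that $D = [m,M]$ where $m$ and $M$ are respectively the minimum and maximum of the four corner values $f(a_1,a_2)$, $f(a_1,b_2)$, $f(b_1,a_2)$, $f(b_1,b_2)$. The assumption that $d$ is interior to $D$ therefore amounts to $m < d < M$, so in particular some corner $P$ satisfies $f(P) < d$ and some corner $Q$ satisfies $f(Q) > d$.

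Next, let $c_0 = ((a_1+b_1)/2,(a_2+b_2)/2)$ be the center of $[a,b]$, which lies in $(a,b)$. If $f(c_0)=d$, take $c := c_0$ and we are done. Otherwise, by symmetry we may assume $f(c_0)<d$ (the case $f(c_0)>d$ is analogous, using a corner with value strictly less than $d$), and we pick a corner $Q$ with $f(Q)>d$. Consider the straight-line segment $\gamma : [0,1] \to [a,b]$ given by $\gamma(t) = (1-t)c_0 + tQ$. A direct check shows that for every $t \in [0,1)$ each coordinate of $\gamma(t)$ is a convex combination of the corresponding coordinate of $Q$ (equal to $a_i$ or $b_i$) and the midpoint of the corresponding factor interval, with strictly positive weight on the midpoint; this forces $\gamma(t)$ to lie strictly between $a_i$ and $b_i$ in each coordinate, so $\gamma(t) \in (a,b)$. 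The composition $t \mapsto f(\gamma(t))$ is continuous on $[0,1]$, equals $f(c_0)<d$ at $t=0$, and equals $f(Q)>d$ at $t=1$, so the classical intermediate value theorem yields some $t^* \in (0,1)$ with $f(\gamma(t^*)) = d$, and $c := \gamma(t^*) \in (a,b)$ is the required point.

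The only subtlety — and what I view as the main obstacle — is to keep the path strictly inside $(a,b)$ rather than on the boundary of $[a,b]$. One cannot simply connect two corners $P$ and $Q$ by a straight segment, since if they share an edge the segment is trapped in the boundary. Routing the segment from the center $c_0$ to a single corner cleanly sidesteps this issue, because $c_0$ is interior to both factor intervals, forcing every non-terminal point of the segment into $(a,b)$.
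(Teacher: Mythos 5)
Your proof is correct and uses the same core idea as the paper's: restrict $f$ to a line segment contained in $(a,b)$ and invoke the one-variable intermediate value theorem. The difference is in how the segment is kept interior. The paper first uses continuity of $f$ to find $a',b'\in(a,b)$ near the extreme corners with $f(a')<d<f(b')$, then joins them; convexity of $(a,b)$ keeps the whole segment interior. You instead anchor one end at the centre $c_0$ of $[a,b]$ and aim at a single corner: every non-terminal point of that segment is automatically interior by convexity, and since the two endpoint values lie strictly on opposite sides of $d$, the intermediate-value parameter $t^*$ lands in the open interval $(0,1)$, so $\gamma(t^*)\in(a,b)$. Your route avoids the (small, implicit) continuity argument the paper needs to nudge corners inward, at the cost of a short case split on whether $f(c_0)=d$. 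Both are correct; yours is perhaps slightly more self-contained in its handling of the interiority issue, which you rightly identify as the only real subtlety.
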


\begin{proof}
From the assumptions it follows that there are $a',b' \in (a,b)$
with $f(a_1',a_2') < d < f(b_1',b_2')$.
Let $L$ denote the line segment from $a'$ to $b'$.
Since $L$ is connected and $f$ is continuous,
$f(L)$ is an interval. 
Thus, there is $c \in L$ with $f(c)=d$. 
Since $c \in L \subsetneq [a,b]$
it follows that $c \in (a , b)$.
\end{proof}

\section{Double differentiability}
\label{sec:doubledifferentiablility}

In this section, we define (signed) double limits and
(signed) double derivatives. 
In analogue with the single variable situation,
we show that double differentiable functions
are double continuous (see Proposition \ref{ddiffimpliesdcont}).
Thereafter, we show a double Rolle's theorem
(see Proposition \ref{doublerolle}),
a double Lagrange's mean value theorem
(see Proposition \ref{meanvaluethm}),
a double Cauchy's mean value theorem 
(see Proposition \ref{meanvaluethmcauchy}),
a double Fermat's theorem 
(see Proposition \ref{propfermat})
and a double first derivative test
(see Proposition \ref{propsigns}).
At the end of this section, we define
double primitive functions.

Suppose that $a \in \R^2$, $L \in \R$ and that 
$f$ is a double function.
We say that $f$ has \emph{double limit $L$ as $x$
approaches $a$} 
if for every $\epsilon > 0$ there is
$\delta \in \R_{++}^2$ with  
$P(a,\delta) \subseteq D(f)$
and $| f(x_1,x_2) - L | < \epsilon$ whenever
$x \in P(a,\delta)$. 
In that case, we write 
$\lim_{x \leadsto a} f(x_1,x_2) = L$ or
$f(x_1,x_2) \to L$ as $x \leadsto a$.

Now we define \emph{signed} double limits.
We say that $f$ has \emph{$+-$double limit 
$L$ as $x$ approaches $a$} 
if for every $\epsilon > 0$ there is
$\delta \in \R_{++}^2$ with 
$P_{+-}(a,\delta) \subseteq D(f)$,
and $| f(x_1,x_2) - L | < \epsilon$ whenever
$x \in P_{+-}(a,\delta)$. 
In that case, we write 
$\lim_{x \leadsto a^{+-}} f(x_1,x_2) = L$ or
$f(x_1,x_2) \to L$ as $x \leadsto a^{+-}$.
Analogously, $++$double limits, 
$-+$double limits and 
$--$double limits are defined.

More generally, one may analogously define (signed)
double limits when $a$ and $L$ belong to
$\overline{\R}^2 = \overline{\R} \times \overline{\R}$,
where $\overline{\R}$ denotes the 
affinely extended real number system 
$\R \cup \{ \infty,-\infty \}$.
We leave the details of these definitions to the 
reader.

Suppose that $a,b \in \R^2$ and that $a \nsim b$. 
If $f$ is defined at $(a_1,a_2)$, $(a_1,b_2)$,
$(b_1,a_2)$ and $(b_1,b_2)$, then we define the 
\emph{double mean slope of $f$ from $a$ to $b$}
to be the quotient:
\[
m_a^b(f) := \frac{ \D_a^b(f) }{ (b_1 - a_1)(b_2 - a_2) }.
\]
We say that $f$ is \emph{double differentiable} at $a$ if 
there is a double open interval $I$ with 
$a \in I \subseteq D(f)$ and
the double limit $\lim_{x \leadsto a} m_a^x(f)$ exists. 
In that case, we let $f'(a)$ denote this limit and we say that
$f'(a)$ is the \emph{double derivative} of $f$ at $a$
(cf. \cite{trahan1969}). 
Using signed double limits, we can analogously
define the $++$double derivative $f'_{++}(a)$,
the $+-$double derivative $f'_{+-}(a)$,
the $-+$double derivative $f'_{-+}(a)$ and the
$--$double derivative $f'_{--}(a)$.

Let $I$ be a double interval 
such that $I \subseteq D(f)$.
If $f$ is double differentiable at every $a \in I$,
then we say that $f$ is \emph{double differentiable on $I$}.
In this definition it is understood that when $a$
is a boundary point of $I$, then 
by double differentiable at $a$ 
we mean this in the sense of the 
signed double differentiability defined above.
Namely, suppose, for instance, that 
$I = [a_1 , b_1) \times (a_2 , b_2]$.
By differentiability at the point $(a_1,b_2)$, we mean 
$+-$double differentiability, and by
double differentiability at a point 
$(a_1,t)$, where $a_2 < t < b_2$,
we mean both $++$double differentiability and 
$+-$double differentiability.

The next result, more commonly known 
as Schwarz's theorem, Clairaut's theorem, or Young's theorem, 
is well-known (see e.g. \cite[Theorem 9.40]{rudin1976}).
We have, nevertheless, chosen to include a proof of it 
since it involves, in a natural way, 
use of the double derivative.
Note that for iterated partial derivatives, we will 
use the convention that $f_{12}$ and $f_{21}$ 
denote $(f_1)_2$ and $(f_2)_1$ respectively.

\begin{prop}\label{schwarztheorem}
Suppose that $f$ is a double function with the property that
the mixed partial derivatives $f_{12}$ and $f_{21}$ 
exist and are continuous at $a \in \R^2$.
Then $f$ is double differentiable at $a$ and 
$f'(a) = f_{12}(a) = f_{21}(a)$.
\end{prop}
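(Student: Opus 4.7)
The plan is to apply the mean value theorem twice and let the continuity of $f_{12}$ (respectively $f_{21}$) at $a$ do the rest; this is the standard route to Schwarz/Clairaut, repackaged through the operator $\D$.

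First I would fix $\epsilon > 0$ and choose $\delta \in \R_{++}^2$ so small that the open box $D(a,\delta)$ lies inside the neighbourhood of $a$ on which both $f_1$, $f_2$, $f_{12}$ and $f_{21}$ exist, and moreover $|f_{12}(y)-f_{12}(a)| < \epsilon$ and $|f_{21}(y)-f_{21}(a)| < \epsilon$ for all $y \in D(a,\delta)$. (The existence of $f_1$ and $f_2$ in a neighbourhood of $a$ is automatic from the hypothesis that the mixed partials exist in a neighbourhood of $a$, which is implied by continuity of $f_{12}$ and $f_{21}$ at $a$.)

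Next, for $x \in P(a,\delta)$, I would rewrite the double difference in the suggestive form
\[
\D_a^x(f) = \bigl[ f(x_1,x_2) - f(x_1,a_2) \bigr] - \bigl[ f(a_1,x_2) - f(a_1,a_2) \bigr]
\]
and apply the one-variable mean value theorem to $\phi(t) := f(t,x_2) - f(t,a_2)$ on the interval with endpoints $a_1, x_1$. This produces a point $c_1$ strictly between $a_1$ and $x_1$ with
\[
\D_a^x(f) = \bigl( f_1(c_1,x_2) - f_1(c_1,a_2) \bigr)(x_1 - a_1).
\]
A second application of the mean value theorem, this time to $s \mapsto f_1(c_1,s)$ on the interval with endpoints $a_2, x_2$, yields a point $c_2$ strictly between $a_2$ and $x_2$ such that
\[
\D_a^x(f) = f_{12}(c_1,c_2)\,(x_1-a_1)(x_2-a_2).
\]
Since $x \nsim a$, I may divide by $(x_1-a_1)(x_2-a_2)$ to get $m_a^x(f) = f_{12}(c_1,c_2)$, and since $(c_1,c_2) \in D(a,\delta)$ this gives $|m_a^x(f) - f_{12}(a)| < \epsilon$. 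Hence $\lim_{x \leadsto a} m_a^x(f) = f_{12}(a)$, so $f$ is double differentiable at $a$ with $f'(a) = f_{12}(a)$.

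Finally, repeating the whole argument with the roles of the two coordinates swapped (start instead from $\psi(t) := f(x_1,t) - f(a_1,t)$) produces $f'(a) = f_{21}(a)$, which simultaneously proves the equality $f_{12}(a) = f_{21}(a)$. The only subtle point is the initial neighbourhood choice: one must verify that the hypotheses guarantee $f_1$ exists not just along the axis $\{a_1\}\times I$ but on a full two-dimensional neighbourhood of $a$, since the first MVT application requires differentiability of $\phi$, not merely of $f_1$ at $a$. Beyond that, the argument is routine.
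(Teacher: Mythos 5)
Your proof is correct and follows essentially the same approach as the paper: applying the one-variable mean value theorem twice (first in $x_1$, then in $x_2$) to express $\D_a^x(f)$ as $f_{12}$ evaluated at an interior point, and then invoking continuity of $f_{12}$ at $a$; your $\phi$ plays the same role as the paper's auxiliary function $u$. Your concluding remark about verifying that $f_1$ exists on a full two-dimensional neighbourhood of $a$ is a fair point that the paper handles with a single sentence ("The assumptions imply that \ldots"), relying on the fact that continuity of $f_{12}$ at $a$ presupposes $f_{12}$, hence $f_1$, is defined near $a$.
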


\begin{proof}
The assumptions imply that $f$ and the partial
derivatives $f_{12}$, $f_{21}$,
$f_1$ and $f_2$ are defined in some double 
open interval $I$ containing $a$.
Take $h \in \R^2$ with $h \nsim 0$.
Define functions $u$ and $v$ by:
\[
u(x_1) = f(x_1 , a_2 + h_2) - f(x_1,a_2), 
\]
for $x_1 \in I_1$, and:
\[
v(x_2) = f(a_1 + h_1, x_2) - f(a_1 , x_2), 
\]
for $x_2 \in I_2$.
Repeated use of the single variable mean value theorem yield
$\theta_1,\theta_2 \in (0,1)$ such that:
\begin{eqnarray*}
\Delta_a^{a+h}(f) &=& u(a_1 + h_1) - u(a_1) = h_1 u'(a_1 + \theta_1 h_1) \\[5pt]
&=& h_1 ( f_1(a_1 + \theta_1 h_1 , a_2 + h_2) - f_1( a_1 + \theta_1 h_1, a_2) ) \\[5pt]
&=& h_1 h_2 f_{12}( a_1 + \theta_1 h_1 , a_2 + \theta_2 h_2).
\end{eqnarray*}
Thus,
$m_a^{a+h}(f) =  f_{12}( a_1 + \theta_1 h_1 , a_2 + \theta_2 h_2)
\to f_{12}(a_1,a_2)$ as $h \leadsto 0$.
Similarly, there exist $\theta_3,\theta_4 \in (0,1)$ with:
\begin{eqnarray*}
\Delta_a^{a+h}(f) &=& v(a_2 + h_2) - v(a_2) = h_3 v'(a_2 + \theta_3 h_2) \\[5pt]
&=& h_3 ( f_2(a_1+h_1 , a_2 + \theta_3 h_2) - f_2( a_1 , a_2 + \theta_3 h_2) ) \\[5pt]
&=& h_1 h_2 f_{21}( a_1 + \theta_4 h_1 , a_2 + \theta_3 h_2).
\end{eqnarray*}
Hence,
$m_a^{a+h}(f) =  
f_{21}( a_1 + \theta_4 h_1 , a_2 + \theta_3 h_2) 
\to f_{21}(a_1,a_2)$ as $h \leadsto 0$.
Thus, $f$ is double differentiable at $a$ and 
$f'(a) = f_{12}(a) = f_{21}(a)$.
\end{proof}

\begin{exa}
By Proposition \ref{schwarztheorem}, all sufficiently
smooth double functions are double differentiable.
However, the class of double differentiable \linebreak
functions contains examples of functions 
which are not even partially \linebreak differentiable.
In fact, it is clear that all double constant functions are double 
differentiable with double derivative equal to zero
everywhere.
Therefore, the class of double diffe\-rentiable functions
even contains many examples of everywhere 
discontinuous
functions (see Example \ref{ex:doubleconstant}).
\end{exa}

\begin{exa}
It is easy to construct examples of double functions 
that are not double 
constant but double differentiable at a point
where an iterated partial derivative fails to exist.
Namely, suppose that $g$ and $h$ are single variable 
functions defined on $\R$.
Furthermore, suppose that $g(x_1) \to 0$ as $x_1 \to 0$;
$g(x_1) \neq 0$ for non-zero $x_1 \in \R$;
$h(x_2)$ is bounded near the origin;
in any open interval around the origin, 
there exists $x_2$ such that $h(x_2) \neq 0$;
$\lim_{x_2 \to 0} h(x_2)$ does not exist.
For instance, we can choose 
$g(x_1) = x_1$ and
$h(x_2) = \sin(1/x_2)$, for $x_2 \neq 0$, and 
$h(0)=0$.
Now put $f(x) = x_1 g(x_1) \cdot x_2 h(x_2)$ for $x \in \R^2$.
Then, in any double open interval containing the 
origin, there exists $x$ such that 
$\D_0^x(f) = f(x) \neq 0$.
Also, clearly, the double derivative $f'(0,0)$ exists 
and is equal to zero. However, for any fixed 
nonzero $x_1 \in \R$, 
$f_2(x_1,0) = \lim_{x_2 \to 0} f(x)/x_2 = 
\lim_{x_2 \to 0} x_1 g(x_1) h(x_2)$ does not exist.
Thus, the iterated partial derivative $f_{21}(0,0)$ 
does not exist. 
\end{exa}

In analogy with the single variable situation,
there is a ``first order'' \linebreak approximation to $\D$:

\begin{prop}\label{ddequivalent}
Suppose that $f$ is a double function and that $a \in \R^2$.
Then $f$ is double differentiable at $a$
if and only if there is $d \in \R$, a double 
neighbourhood $D(a,\delta)$, where $f$ is defined,
and a function 
$\rho : P(a,\delta) \to \R$ such that 
$\D_a^x(f) = (x_1 - a_1)(x_2 - a_2)d + 
(x_1-a_1) (x_2-a_2) \rho(x)$,
for $x \in P(a,\delta)$, and
$
\lim_{x \leadsto a} \rho(x) = 0
$.
In that case, $f'(a) = d$.
\end{prop}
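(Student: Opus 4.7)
The plan is to prove this by essentially rearranging the definition of double differentiability, in direct analogy with the familiar first-order approximation characterization in single variable calculus. The crucial observation is that on the punctured set $P(a,\delta)$ we have $x \nsim a$, which means $x_1 \neq a_1$ and $x_2 \neq a_2$; hence $(x_1-a_1)(x_2-a_2) \neq 0$ and the double mean slope $m_a^x(f)$ is well-defined there, allowing us to freely move between $\D_a^x(f)$ and $m_a^x(f)$.

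For the forward direction, I would assume $f$ is double differentiable at $a$, set $d := f'(a)$, and pick $\delta \in \R_{++}^2$ small enough that $D(a,\delta)$ lies inside the double open interval supplied by the definition of differentiability. Then I would define $\rho : P(a,\delta) \to \R$ by $\rho(x) := m_a^x(f) - d$. Multiplying by $(x_1-a_1)(x_2-a_2)$ yields exactly the claimed identity, and the limit relation $\lim_{x \leadsto a} \rho(x) = 0$ is immediate from $\lim_{x \leadsto a} m_a^x(f) = d$.

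For the converse, assuming we are given $d$, $\delta$, and $\rho$ as in the statement, I would just divide the given identity through by $(x_1-a_1)(x_2-a_2)$ (valid on $P(a,\delta)$, as noted above) to obtain $m_a^x(f) = d + \rho(x)$ for $x \in P(a,\delta)$. The hypothesis $\lim_{x \leadsto a} \rho(x) = 0$ then gives $\lim_{x \leadsto a} m_a^x(f) = d$, so $f$ is double differentiable at $a$ with $f'(a) = d$, matching the last sentence of the claim.

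There is no real obstacle here; the argument is mechanical and the only subtlety is being careful that $\rho$ is defined on the punctured neighbourhood $P(a,\delta)$ rather than on all of $D(a,\delta)$, so that division by $(x_1-a_1)(x_2-a_2)$ is legitimate. The proof is therefore short and can be written compactly in a few displayed lines.
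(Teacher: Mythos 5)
Your proof is correct and is exactly the mechanical unpacking (setting $\rho(x) := m_a^x(f) - d$ and observing $x \nsim a$ on $P(a,\delta)$ makes division legitimate) that the paper compresses into the single line ``This follows immediately from the definition.'' Same approach, just written out.
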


\begin{proof}
This follows immediately from the definition of $\D$.
\end{proof}

\begin{prop}\label{ddiffimpliesdcont}
Suppose that $f$ is a double function which is
defined at $a \in \R^2$.
If $f$ is double differentiable at $a$,
then $f$ is double continuous at $a$. 
\end{prop}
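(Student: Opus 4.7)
The plan is to reduce everything to Proposition \ref{ddequivalent}. Double differentiability of $f$ at $a$ supplies, via that proposition, a number $d = f'(a)$, a double neighbourhood $D(a,\delta) \subseteq D(f)$, and a function $\rho : P(a,\delta) \to \R$ with $\rho(x) \to 0$ as $x \leadsto a$, such that
\[
\D_a^x(f) = (x_1-a_1)(x_2-a_2)\bigl(d+\rho(x)\bigr)
\]
for every $x \in P(a,\delta)$. I will take the double open interval $I$ required by the definition of double continuity to be $I := D(a,\delta)$.

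To verify the first family of limits, fix $x_1 \in I_1$ and examine $\lim_{x_2 \to a_2} \D_a^x(f)$. If $x_1 = a_1$, then unwinding the definition of $\D$ shows that $\D_a^x(f) = 0$ identically, and the limit is trivially zero. If instead $x_1 \neq a_1$, then for every $x_2 \in I_2$ with $x_2 \neq a_2$ the point $x$ lies in $P(a,\delta)$, the displayed identity applies, and
\[
\bigl|\D_a^x(f)\bigr| = |x_1-a_1|\,|x_2-a_2|\,\bigl|d+\rho(x)\bigr|.
\]
As $x_2 \to a_2$, the factor $|x_2-a_2|$ tends to zero while $|d+\rho(x)|$ stays bounded, so the whole expression tends to zero. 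The symmetric argument, swapping the roles of the two coordinates, yields $\lim_{x_1 \to a_1} \D_a^x(f) = 0$ for every fixed $x_2 \in I_2$, completing the proof.

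The only step that warrants a moment's thought — and in my view the main, though minor, technical point — is the passage from the genuinely two-dimensional limit $\rho(x) \to 0$ as $x \leadsto a$ (defined through the punctured neighbourhood $P(a,\delta)$) to the one-variable limit in which $x_1 \neq a_1$ is held fixed while $x_2 \to a_2$. This is legitimate because any such point $x = (x_1,x_2)$ with $x_2$ sufficiently close to but distinct from $a_2$ satisfies both $x \in D(a,\delta)$ and $x \nsim a$, so it lies in $P(a,\delta)$ and is therefore a permissible approach route to $a$ in the double-limit sense; hence $\rho(x) \to 0$ along it.
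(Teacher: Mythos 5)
Your approach is the same as the paper's: invoke Proposition~\ref{ddequivalent} to write $\D_a^x(f) = (x_1-a_1)(x_2-a_2)\bigl(d+\rho(x)\bigr)$ and then send one coordinate to its limit while the other is held fixed. The substantive issue is in your final paragraph, which is where you attempt to justify the one crucial step, and the justification given there does not actually hold. The double limit $\lim_{x \leadsto a}\rho(x) = 0$ says: for every $\epsilon > 0$ there is $\delta' \in \R_{++}^2$, \emph{which shrinks with} $\epsilon$, such that $|\rho(x)| < \epsilon$ for $x \in P(a,\delta')$. For a \emph{fixed} $x_1 \neq a_1$, once $\epsilon$ is small enough that $\delta_1' < |x_1 - a_1|$, the points $(x_1,x_2)$ with $x_2 \to a_2$ no longer lie in $P(a,\delta')$, so the double-limit estimate gives no information about $\rho$ along that line. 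Merely observing that $(x_1,x_2) \in P(a,\delta)$ (for the original, fixed $\delta$) is therefore not enough to conclude that $\rho(x) \to 0$, or even that $\rho$ is bounded, as $x_2 \to a_2$ with $x_1$ fixed; one can manufacture a $\rho$ with double limit $0$ at $a$ that blows up along lines $x_1 = \text{const}$ when $|x_1-a_1|$ is close to $\delta_1$, and a corresponding $f$ realising it.

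The repair is small but necessary, and it consists precisely in choosing the double open interval $I$ in the definition of double continuity more carefully than $I = D(a,\delta)$. Apply the double-limit condition once with a fixed $\epsilon$, say $\epsilon = 1$, to obtain $\delta' \leq \delta$ with $|\rho(x)| < 1$ for all $x \in P(a,\delta')$, and set $I := D(a,\delta')$. Then for any fixed $x_1 \in I_1$ with $x_1 \neq a_1$ and any $x_2 \in I_2$ with $x_2 \neq a_2$, the point $x$ lies in $P(a,\delta')$, so $|d+\rho(x)| \leq |d| + 1$, and the estimate $|\D_a^x(f)| \leq |x_1-a_1|\,|x_2-a_2|\,(|d|+1) \to 0$ as $x_2 \to a_2$ goes through; the case $x_1 = a_1$ is trivial as you note, and the symmetric limit is handled identically. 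With that one adjustment your argument is complete. It is worth remarking that the paper's own proof is phrased the same way and glosses over exactly this point, so in identifying the subtlety you put your finger on something real; the error is only in the proposed resolution.
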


\begin{proof}
Suppose that $f$ is double differentiable at $a$.
Take $\delta$ and $\rho$ satisfying the conditions
in the formulation of Proposition \ref{ddequivalent}.
If $x \nsim a$, then:
\[
\begin{array}{rcl}
f(x_1,x_2)-f(a_1,x_2) & = &  \D_a^x(f) + f(x_1,a_2)-f(a_1,a_2) \\[5pt]
& = & (x_1 - a_1)(x_2 - a_2)d + (x_1-a_1) (x_2-a_2) \rho(x) \\[5pt]
& + & f(x_1,a_2)-f(a_1,a_2) \ \to \ f(x_1,a_2)-f(a_1,a_2),
\end{array}
\] 
as $x_2 \to a_2$, and:
\[
\begin{array}{rcl}
f(x_1,x_2)-f(x_1,a_2) & = &  
\D_a^x(f) + f(a_1,x_2)-f(a_1,a_2) \\[5pt]
& = & (x_1 - a_1)(x_2 - a_2)d + (x_1-a_1) (x_2-a_2) \rho(x) \\[5pt]
& + & f(a_1,x_2)-f(a_1,a_2) \ \to \ f(a_1,x_2)-f(a_1,a_2),
\end{array}
\]
as $x_1 \to a_1$.
\end{proof}

We now proceed to show double versions of Rolle's theorem
and various versions of the mean value theorem. 
To this end, we need
two propositions. \linebreak
We adapt, to the situation at hand, an approach
which originally was \linebreak
invented by Cauchy (see \cite[p. 169]{grabiner1981})
for the single variable situation 
and then corrected and clarified by Plante \cite{plante2017}.

\begin{prop}\label{propsqueeze} 
Suppose that $a,b \in \R^2$, $a < b$, and that $x \in (a,b)$.
Let $f : [a,b] \to \R$ be a double function.  
Consider the following four real numbers: 
{\small
\[ 
m_1 := m_a^x(f), \ m_2 := m_x^b(f), \
m_3 := \frac{\D_{(a_1,x_2)}^{(x_1,b_2)}(f)}{(x_1-a_1)(b_2-x_2)}, 
\ 
m_4 := \frac{\D_{(x_1,a_2)}^{(b_1,x_2)}(f)}{(b_1-x_1)(x_2-a_2)}.
\] }

\noindent Then either (i) all of them are equal to $m_a^b(f)$, 
or (ii) at least one of them is greater than $m_a^b$ and 
at least one of them is less than $m_a^b$.
\end{prop}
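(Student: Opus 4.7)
The core idea is to recognize $m_a^b(f)$ as a convex combination of the four numbers $m_1, m_2, m_3, m_4$, and then invoke the elementary fact that any weighted average with strictly positive weights either equals every one of its inputs or lies strictly between the minimum and the maximum.

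First I would appeal to Proposition \ref{subdivision}(d), which provides the four-piece decomposition
\[
\D_a^b(f) = \D_a^x(f) + \D_x^b(f) + \D_{(a_1,x_2)}^{(x_1,b_2)}(f) + \D_{(x_1,a_2)}^{(b_1,x_2)}(f).
\]
Denoting the four sub-rectangle areas
\[
A_1 = (x_1-a_1)(x_2-a_2), \quad A_2 = (b_1-x_1)(b_2-x_2),
\]
\[
A_3 = (x_1-a_1)(b_2-x_2), \quad A_4 = (b_1-x_1)(x_2-a_2),
\]
each $\D$-term above equals $A_i m_i$ by the very definition of $m_i$, so
\[
\D_a^b(f) = A_1 m_1 + A_2 m_2 + A_3 m_3 + A_4 m_4.
\]

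Next, the crucial algebraic observation is
\[
A_1 + A_2 + A_3 + A_4 = (b_1-a_1)(b_2-a_2),
\]
which follows from the factoring
\[
(b_1-a_1)(b_2-a_2) = \bigl((x_1-a_1)+(b_1-x_1)\bigr)\bigl((x_2-a_2)+(b_2-x_2)\bigr).
\]
Dividing the previous display by $(b_1-a_1)(b_2-a_2)$ therefore exhibits
\[
m_a^b(f) = \lambda_1 m_1 + \lambda_2 m_2 + \lambda_3 m_3 + \lambda_4 m_4,
\]
with $\lambda_i := A_i/\bigl((b_1-a_1)(b_2-a_2)\bigr) > 0$ and $\lambda_1+\lambda_2+\lambda_3+\lambda_4 = 1$, where the positivity uses the hypothesis $x \in (a,b)$.

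Finally, I would finish with the standard weighted-average dichotomy: if all $m_i$ equal $m_a^b(f)$, we are in case (i); otherwise let $M$ and $m$ denote the maximum and minimum of $\{m_1,m_2,m_3,m_4\}$. Since the $\lambda_i$ are strictly positive and sum to $1$, the convex combination satisfies $m \le m_a^b(f) \le M$, and moreover strict inequality must hold on at least one side whenever some $m_i$ differs from another, yielding case (ii). There is no real obstacle here; the only thing to be careful about is verifying the area identity $A_1+A_2+A_3+A_4 = (b_1-a_1)(b_2-a_2)$ and noting that $x \in (a,b)$ is exactly what guarantees all four weights are strictly positive, which is what rules out the degenerate ``all $\le$'' or ``all $\ge$'' possibility.
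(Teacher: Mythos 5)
Your proof is correct and rests on exactly the same ingredients as the paper's: the four-way decomposition from Proposition \ref{subdivision}(d), the area identity $A_1+A_2+A_3+A_4=(b_1-a_1)(b_2-a_2)$, and the observation that a strictly-positively-weighted average can equal a common value only if all terms do. The paper packages this as an explicit proof by contradiction (assume $m_a^b(f)\leq m_i$ for all $i$ with one strict inequality, multiply each $m_i$ by its area, sum, and derive $\D_a^b(f)<\D_a^b(f)$); you package it as ``$m_a^b(f)$ is a convex combination of the $m_i$ with strictly positive weights, hence strictly between $\min_i m_i$ and $\max_i m_i$ unless all are equal.'' These are the same argument in different clothing, and your version is arguably the cleaner statement of what is going on. One small wording slip: you write that ``strict inequality must hold on at least one side,'' but case (ii) requires strict inequality on \emph{both} sides, i.e.\ $\min_i m_i < m_a^b(f) < \max_i m_i$. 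The convex-combination framing does deliver both strict inequalities (if, say, $\max_i m_i = m_a^b(f)$ then positivity of the weights forces every $m_i$ to equal the max), so the conclusion is right; just say ``on both sides'' to match what (ii) demands.
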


\begin{proof}
Suppose that (i) does not hold. 
Seeking a contradiction,  
suppose that $m_a^b(f) \leq m_i$ for all $i \in \{ 1,2,3,4 \}$
with strict inequality for at least one index.
By Proposition \ref{subdivision}(d), we get that:
\[
\begin{array}{rcl}
\D_a^b(f) & = & m_a^b(f) (b_1-a_1)(b_2-a_2) \\[5pt]
& = & m_a^b(f) (x_1-a_1)(x_2-a_2) + 
m_a^b(f) (b_1-x_1)(b_2-x_2) \\[5pt]
& + & m_a^b(f) (x_1-a_1)(b_2-x_2) + 
m_a^b(f) (b_1-x_1)(x_2-a_2) \\[5pt]
& < & m_1 (x_1-a_1)(x_2-a_2) + m_2 (b_1-x_1)(b_2-x_2) \\[5pt]
& + & m_3 (x_1-a_1)(b_2-x_2) + m_4 (b_1-x_1)(x_2-a_2) \\[5pt]
& = & \D_a^x(f) + \D_x^b(f) + \D_{(a_1,x_2)}^{(x_1,b_2)}(f) +  \D_{(x_1,a_2)}^{(b_1,x_2)}(f) \ = \ \D_a^b(f)
\end{array}
\]
and hence the contradiction $\D_a^b(f) < \D_a^b(f)$.
Similarly, if we assume that $m_a^b(f) \geq m_i$, 
for all $i \in \{ 1,2,3,4 \}$,
with strict inequality for at least one index,
then we get the contradiction $\D_a^b(f) > \D_a^b(f)$.
Thus (ii) holds.
\end{proof}

\begin{prop}\label{propsqueezeagain}
Suppose that $a,b \in \R^2$ satisfy $a < b$.
Let $f : (a,b) \to \R$ be a double function which is double differentiable at $c \in (a,b)$. Let
$
a(1) \leq a(2) \leq a(3) \leq \cdots
$
and
$
b(1) \geq b(2) \geq b(3) \geq \cdots
$
be sequences in $(a,b)$ with
$
\lim_{n \to \infty} a(n) = \lim_{n \to \infty} b(n) = c
$
and satisfying one of the following properties:
\begin{itemize}

\item[]{\rm (i)} $\forall n \in \N$ $a(n) < c < b(n)$;

\item[]{\rm (ii)} $\forall n \in \N$ $a(n) < c$ and $\exists N \in \N$ $\forall n \geq N$ $c = b(n)$;

\item[]{\rm (iii)} $\forall n \in \N$ $c < b(n)$ and $\exists N \in \N$ $\forall n \geq N$ $a(n) = c$.

\end{itemize}
Then $\lim_{n \to \infty} m_{a(n)}^{b(n)} (f) = f'(c)$.
\end{prop}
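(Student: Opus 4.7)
The plan is to separate the three cases, with (ii) and (iii) following directly from the definition of $f'(c)$ and (i) handled via the squeeze provided by Proposition \ref{propsqueeze}.

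In case (ii), eventually $b(n) = c$, hence $m_{a(n)}^{b(n)}(f) = m_{a(n)}^c(f) = m_c^{a(n)}(f)$ using the symmetry $\D_a^b(f) = \D_b^a(f)$ from Proposition \ref{subdivision}(a). Since $a(n) \to c$ with $a(n) \nsim c$ (because $a(n) < c$ componentwise), the definition of the double derivative gives $m_c^{a(n)}(f) \to f'(c)$: for every $\epsilon > 0$, the $\delta \in \R_{++}^2$ witnessing $f'(c)$ eventually traps $a(n)$ in $P(c,\delta)$. Case (iii) is symmetrical.

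For case (i), apply Proposition \ref{propsqueeze} with $(a,b,x) := (a(n), b(n), c)$; this is legitimate since $a(n) < c < b(n)$. Using both identities in Proposition \ref{subdivision}(a), the four sub-slopes rewrite as
\[
m_1 = m_c^{a(n)}(f), \ \ m_2 = m_c^{b(n)}(f), \ \ m_3 = m_c^{(a(n)_1,b(n)_2)}(f), \ \ m_4 = m_c^{(b(n)_1,a(n)_2)}(f),
\]
and each of the four points $a(n)$, $b(n)$, $(a(n)_1, b(n)_2)$, $(b(n)_1, a(n)_2)$ is $\nsim c$ (all coordinate inequalities are strict) and converges to $c$. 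Hence $m_i \to f'(c)$ for $i = 1,2,3,4$ as $n \to \infty$, by the definition of the double derivative. Since Proposition \ref{propsqueeze} places $m_{a(n)}^{b(n)}(f)$ in $[\min_i m_i, \max_i m_i]$ for every $n$, the squeeze theorem delivers $m_{a(n)}^{b(n)}(f) \to f'(c)$.

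The main obstacle is really just the algebraic bookkeeping needed to rewrite the ``anti-diagonal'' sub-slopes $m_3$ and $m_4$ as $m_c^y(f)$ for points $y \nsim c$ tending to $c$; this uses the sign-flip identity $\D_a^b(f) = -\D_{(a_1,b_2)}^{(b_1,a_2)}(f)$ of Proposition \ref{subdivision}(a) together with the symmetry of $\D$, and keeping track of the two corresponding sign changes in the denominators $(c_1 - a(n)_1)(b(n)_2 - c_2)$, etc.
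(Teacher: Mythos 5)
Your proposal is correct and takes essentially the same route as the paper: for case (i) apply Proposition~\ref{propsqueeze} with $x=c$, observe that each of the four resulting quantities is (after the sign bookkeeping you describe) a mean slope $m_c^{y}(f)$ with $y\nsim c$ and $y\to c$, hence each tends to $f'(c)$, and squeeze; cases (ii) and (iii) are read off directly from the definition of $f'(c)$. The only difference is that you carry out the rewriting of $m_3,m_4$ as $m_c^y(f)$ explicitly, whereas the paper leaves that step implicit.
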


\begin{proof}
Suppose that (i) holds.
For every $n \in \N$, put: 
\[ 
m_1(n) := \frac{\D_{a(n)}^c(f)}{(c_1-a_1(n))(c_2-a_2(n))} \quad 
m_2(n) := \frac{\D_c^{b(n)}(f)}{(b_1(n)-c_1)(b_2(n)-c_2)}
\]
\[ 
m_3(n) := \frac{\D_{(a_1(n),c_2)}^{(c_1,b_2(n))} (f)}{(c_1-a_1(n))(b_2(n)-c_2)} \quad 
m_4(n) := \frac{\D_{(c_1,a_2(n))}^{(b_1(n),c_2)} (f)}{(b_1(n)-c_1)(c_2-a_2(n))}.
\]
From the definition of the double derivative 
we get that $\lim_{n \to \infty} m_i(n) = f'(c)$
for all $i \in \{ 1,2,3,4 \}$.
Thus, from Proposition \ref{propsqueeze}, 
it follows that $\lim_{n \to \infty} m_{a(n)}^{b(n)}(f) = f'(c)$.

If (ii) (or (iii)) holds, then, from the definition of the
double derivative, we get that
$\lim_{n \to \infty} m_{a(n)}^{b(n)}(f) = \lim_{n \to \infty} m_{a(n)}^c(f) = f'(c)$
(or $\lim_{n \to \infty} m_{a(n)}^{b(n)}(f) = \lim_{n \to \infty} m_c^{b(n)}(f) = f'(c)$).
\end{proof}

The next result is \cite[Satz 13]{bogel1934}.
Noteworthily, B\"{o}gel proves this result by an 
argument which is not identical, but similar in 
spirit, to our proof. Thus, in particular, 
for single variable functions,
B\"{o}gel's proof produces the classical Rolle's 
theorem without resorting to the Weierstrass
extremum theorem (cf.~\cite{plante2017}).

\begin{prop}[Double Rolle's theorem]\label{doublerolle}
Let $a,b \in \R^2$ satisfy $a < b$.
Let $f : [a,b] \to \R$ be a 
double continuous function which is double differentiable on 
$(a,b)$. If $\D_a^b(f) = 0$, 
then there exists $c \in (a,b)$ with $f'(c)=0$.
\end{prop}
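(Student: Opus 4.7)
The plan is to adapt, to the double setting, the bisection argument that Cauchy used for the classical single-variable Rolle's theorem in the form clarified by Plante \cite{plante2017}. The key tools are Proposition \ref{propsqueeze}, which controls the four sub-quadrant mean slopes of a rectangle in terms of the mean slope of the whole, and Proposition \ref{propsqueezeagain}, which converts shrinking nested rectangles with known mean slopes into the value of $f'$ at the limit point.

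The core construction is a nested sequence of subrectangles $[a^{(n)},b^{(n)}]$, beginning with $[a^{(0)},b^{(0)}] = [a,b]$, whose side-lengths halve at each step and satisfy $\D_{a^{(n)}}^{b^{(n)}}(f)=0$ for every $n$. For the inductive step, I would bisect $[a^{(n)},b^{(n)}]$ at its midpoint $x^{(n)}$. By Proposition \ref{propsqueeze}, either all four sub-quadrant mean slopes vanish, in which case any sub-quadrant may be taken as $[a^{(n+1)},b^{(n+1)}]$, or at least one is strictly positive and at least one is strictly negative. In the latter case, set $h=(b^{(n)}-a^{(n)})/2$ and consider $\phi(y):=\D_y^{y+h}(f)$ for $y \in [a^{(n)},a^{(n)}+h]$. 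By Proposition \ref{propcontinuity}, $\phi$ is separately continuous in $y_1$ and $y_2$, and its values at the four corners of its domain are positive multiples of the four sub-quadrant mean slopes, so $\phi$ is strictly positive at one corner and strictly negative at another. Walking along an axis-aligned path between two such corners, and invoking the single-variable intermediate value theorem on each segment, produces a point $y^*$ with $\phi(y^*)=0$; I then set $[a^{(n+1)},b^{(n+1)}]:=[y^*,y^*+h]$.

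Since the side-lengths shrink like $2^{-n}$, the monotone sequences $a^{(n)}$ and $b^{(n)}$ converge by completeness to a common point $c \in [a,b]$. Proposition \ref{propsqueezeagain} then gives
\[
f'(c) \;=\; \lim_{n \to \infty} m_{a^{(n)}}^{b^{(n)}}(f) \;=\; 0.
\]

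The main obstacle is guaranteeing that $c$ lies in the open rectangle $(a,b)$, since $f'$ is defined only there. The IVT-produced $y^*$ may lie on a face of $\phi$'s domain, so the new subrectangle shares a face with its predecessor, and in principle iterating could make $c$ accumulate on $\partial [a,b]$. I would address this in the spirit of \cite{plante2017} by exploiting the freedom in the sliding step: between any sub-quadrant with positive mean slope and any with negative mean slope there are two axis-aligned routes in $\phi$'s domain, passing through either of the two remaining corners, and a short case analysis on the signs of $\phi$ at those intermediate corners shows that at least one route can always be chosen so that $y^*$ is strictly interior to its domain. This keeps the nested rectangles inside a fixed compact subset of $(a,b)$ and forces $c \in (a,b)$, so Proposition \ref{propsqueezeagain}(i) applies and the argument closes.
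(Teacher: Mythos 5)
Your overall strategy---nested halving of rectangles, using Proposition~\ref{propsqueeze} to detect opposite signs among the four sub-quadrant mean slopes, and Proposition~\ref{propsqueezeagain} to pass to the limit---is the same in spirit as the paper's, and you have correctly isolated the one genuine difficulty: guaranteeing that the common limit $c$ lands in the \emph{open} rectangle $(a,b)$. The fix you propose, however, does not work. Any axis-aligned path joining two corners of the domain of $\phi$ (the lower-left quarter of $[a^{(n)},b^{(n)}]$) lies entirely on the \emph{boundary} of that domain, so the single-variable intermediate value theorem applied along such a path can only produce a zero $y^*$ on an edge, never in the interior, regardless of which of the two routes you take or what the signs of $\phi$ at the intermediate corners happen to be. The same difficulty lurks in your ``all four vanish'' branch: each of the four candidate sub-quadrants shares a corner of $[a^{(n)},b^{(n)}]$, so a forced (or careless) sequence of choices can pin one coordinate of $a^{(n)}$ or $b^{(n)}$ forever and drive $c$ to $\partial[a,b]$, where $f'$ need not exist.

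The paper sidesteps this by replacing the axis-aligned IVT with Proposition~\ref{propintermeiate}: there the intermediate value is attained along a (generically non-axis-aligned) line segment joining two points $a',b'$ strictly inside the quarter-rectangle, and convexity then forces the zero of $\phi$ to be strictly interior. The paper also packages the step as a proof by contradiction (``suppose every rectangle in $(a,b)$ of at most half the side lengths has non-zero double difference''), but the essential gain is that a single application of Proposition~\ref{propintermeiate} yields a rectangle $[p,q]$ with $p,q\in(a,b)$ and $q-p\leq(b-a)/2$; once $[a(1),b(1)]=[p,q]$ is a compact subset of $(a,b)$, all later rectangles---and hence $c$---automatically stay in $(a,b)$, and one of (i)--(iii) of Proposition~\ref{propsqueezeagain} applies. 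If you wish to keep your direct formulation, replace your axis-aligned walk with Proposition~\ref{propintermeiate} (fed by the continuity of $x\mapsto\D_x^{x+h}(f)$ from Proposition~\ref{propcontinuity}), and note that strict interiority on the first step already closes the argument.
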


\begin{proof}
We claim that there are $p,q \in (a,b)$ with 
$\D_p^q(f) = 0$, $p < q$ and $q-p \leq (b-a)/2$.
Let us assume for a moment that the claim holds.
Then we can inductively define sequences
$a(1) \leq a(2) \leq a(3) \leq \cdots$
and $b(1) \geq b(2) \geq b(3) \geq \cdots$
in $(a,b)$ satisfying
$a(n) <  b(n)$, $\D_{a(n)}^{b(n)} = 0$ and
$b(n)-a(n) \leq (b-a)/2^n$, for every $n \in \N$.
Then $\{ a(n) \}_{n=1}^{\infty}$ and $\{ b(n) \}_{n=1}^{\infty}$
have a common limit $c \in (a,b)$ satisfying
 one of the properties (i)-(iii) in Proposition \ref{propsqueezeagain}.
Thus, from the same proposition, it follows that $f'(c)=0$.

Now we show the claim. 
Seeking a contradiction, suppose that
$\D_p^q(f) \neq 0$ for all $p,q \in (a,b)$ such that
$p < q$ and $q-p \leq (b-a)/2$.
Put $h = (b - a)/2$.

\emph{Case 1: $\D_a^{a+h}(f) \neq 0$}.
Consider the map 
$g(x) = \D_x^{x+h}(f)$ for $x \in [a , a+h]$. 
From Proposition \ref{subdivision} it follows that
$g(a) + g(a_1,(b_2+a_2)/2) +  
g( (b_1+a_1)/2, a_2 ) + g( (a+b)/2 ) = 
\D_a^b(f) = 0$.
Therefore, since $g(a) = \D_a^{a+h}(f) \neq 0$,
at least one the real numbers 
$g(a)$, $g(a_1,h_2/2)$, 
$g( (b_1+a_1)/2, a_2 )$ and $g( (a+b)/2 )$
is positive and at least one is negative.
By Proposition \ref{propcontinuity} and Proposition \ref{propintermeiate} it follows that 
there is $p \in (a , (a+b)/2 )$ with $g(p) = 0$. 
Put $q = p + h = p + (b-a)/2$.
Then $\D_p^q(f) = g(p) = 0$, $p,q \in (a,b)$,
$p < q$ and $q-p \leq (b-a)/2$.
This is a contradiction.

\emph{Case 2: $\D_a^{a+h}(f) = 0$}.
Put $b' = a+h$ and $h' = h/2$. 
Then $\D_{a}^{b'}(f) = 0$ and, by the assumptions,
we get that $\D_{a+h'}^{b'}(f) \neq 0$.
Consider the map 
$g(x) = \D_x^{x+h'}(f)$ for $x \in [a , a+h']$.
From Proposition \ref{subdivision} it follows that
$g(a) + g(a_1,(b_2+a_2)/4) +  
g( (b_1+a_1)/4, a_2 ) + g( (a+b)/4 ) =
\D_a^{b'}(f) = 0$.
Therefore, since $g(a) = \D_{a+h'}^{b'}(f) \neq 0$,
at least one the real numbers 
$g(a)$, $g(a_1,(b_2+a_2)/4)$, 
$g( (b_1+a_1)/4, a_2 )$ and $g( (a+b)/4 )$
is positive and at least one is negative.
By Proposition \ref{propcontinuity} and Proposition \ref{propintermeiate} it follows that 
there is $p' \in (a , (a+b)/4 )$ with $g(p') = 0$. 
Put $q' = p' + h' = p' + (b-a)/4$.
Then $\D_{p'}^{q'}(f) = g(p') = 0$, $p',q' \in (a,b)$,
$p' < q'$ and $q'-p' \leq (b-a)/4$.
This is a contradiction.
\end{proof}

\begin{prop}[Double Lagrange's mean value theorem]\label{meanvaluethm}
Let $a,b \in \R^2$ satisfy $a < b$. 
Suppose that $f : [a,b] \to \R$ is a double continuous function 
which is double differentiable on $(a,b)$.
Then there exists $c \in (a,b)$ with $f'(c) = m_a^b(f).$
\end{prop}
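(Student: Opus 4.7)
The plan is to mimic the classical single-variable reduction of the mean value theorem to Rolle's theorem by subtracting from $f$ an auxiliary double function whose double derivative is the constant $m_a^b(f)$ and whose double difference from $a$ to $b$ matches that of $f$. Once such a function is in hand, applying Proposition \ref{doublerolle} to the difference will immediately yield the claim.

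Set $M := m_a^b(f)$ and introduce the auxiliary function
\[
\varphi(x) := (x_1 - a_1)(x_2 - a_2), \qquad x \in [a,b].
\]
A direct computation from the definition gives $\Delta_p^q(\varphi) = (q_1 - p_1)(q_2 - p_2)$ for all $p,q \in [a,b]$, so $m_p^q(\varphi) = 1$ whenever $p \nsim q$; in particular $\varphi$ is double differentiable on $(a,b)$ with $\varphi'(c) = 1$ everywhere, and $\varphi$ is continuous (hence double continuous by Proposition \ref{contaimpliesdconta}(a)) on $[a,b]$. Now define
\[
g(x) := f(x) - M\,\varphi(x), \qquad x \in [a,b].
\]
Since the double difference operator $\Delta$ is $\R$-linear in its functional argument, the same linearity passes through to the mean slope, to double limits, and hence to double continuity and double differentiability. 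Consequently $g$ is double continuous on $[a,b]$, double differentiable on $(a,b)$ with $g'(c) = f'(c) - M$, and
\[
\Delta_a^b(g) = \Delta_a^b(f) - M\,\Delta_a^b(\varphi) = \Delta_a^b(f) - M(b_1 - a_1)(b_2 - a_2) = 0
\]
by the very choice of $M$.

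With these properties verified, Proposition \ref{doublerolle} applied to $g$ supplies a point $c \in (a,b)$ with $g'(c) = 0$, which rewrites as $f'(c) = M = m_a^b(f)$, as required. The only non-bookkeeping step is the selection of the right ``linear'' comparison function, and the correct analogue here is the product $(x_1-a_1)(x_2-a_2)$, whose double derivative is identically $1$; I do not expect any genuine obstacle beyond confirming the elementary linearity properties of $\Delta$, $m$, and the notions of double continuity and double differentiability under addition and scalar multiplication.
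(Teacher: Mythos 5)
Your proof is correct and takes essentially the same approach as the paper: subtract $m_a^b(f)\,(x_1-a_1)(x_2-a_2)$ from $f$ to kill the double difference, then apply the double Rolle theorem. The one small difference is that you compute the double derivative of the auxiliary function $\varphi(x)=(x_1-a_1)(x_2-a_2)$ directly from the definition (via $\Delta_p^q(\varphi)=(q_1-p_1)(q_2-p_2)$, which is exactly Example \ref{exaD}), whereas the paper invokes Proposition \ref{schwarztheorem} for the same purpose; your route is marginally more self-contained but mathematically equivalent.
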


\begin{proof}
Consider the map $g(x) = f(x) - m_a^b(f) (x_1-a_1)(x_2-a_2)$
for $x \in [a,b]$. 
Then it is clear that $\D_a^b(g) = \D_a^b(f) - \D_a^b(f) = 0$.
Proposition \ref{doublerolle} implies the existence of an element $c \in (a,b)$ with $g'(c) = 0$.
From Proposition \ref{schwarztheorem} it follows that
$0 = g'(c) = f'(c) - m_a^b(f) \cdot 1$. 
Therefore $f'(c) = m_a^b(f)$.
\end{proof}

\begin{prop}[Double Cauchy's mean value theorem {\cite[Satz 14]{bogel1934}}]\label{meanvaluethmcauchy}
Let $a,b \in \R^2$ satisfy $a < b$. 
Suppose that $f,g : [a,b] \to \R$ are double continuous functions 
which are double differentiable on $(a,b)$.
Then there exists $c \in (a,b)$ with 
$f'(c) \D_a^b(g) = g'(c) \D_a^b(f).$
\end{prop}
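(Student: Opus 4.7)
The plan is to mimic the classical proof of Cauchy's mean value theorem: construct an auxiliary function to which the double Rolle's theorem (Proposition \ref{doublerolle}) applies. Specifically, I would define
\[
h(x) := f(x) \, \D_a^b(g) - g(x) \, \D_a^b(f), \qquad x \in [a,b].
\]
The two constants $\D_a^b(g)$ and $\D_a^b(f)$ are fixed real numbers, so $h$ is a real linear combination of $f$ and $g$.

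First I would verify the three hypotheses of Proposition \ref{doublerolle} for $h$. Double continuity on $[a,b]$ and double differentiability on $(a,b)$ for real linear combinations follow immediately from the definitions together with the obvious linearity of the double difference operator $\D_a^b$ in the function argument; in particular, the double mean slope $m_a^x$ is also linear, and standard limit laws give $(\alpha f + \beta g)'(c) = \alpha f'(c) + \beta g'(c)$ whenever $f$ and $g$ are double differentiable at $c$. Next, a direct computation using this linearity yields
\[
\D_a^b(h) = \D_a^b(f) \, \D_a^b(g) - \D_a^b(g) \, \D_a^b(f) = 0,
\]
so the ``zero double difference'' hypothesis of double Rolle is satisfied.

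Applying Proposition \ref{doublerolle} then produces $c \in (a,b)$ with $h'(c) = 0$. By the linearity of the double derivative,
\[
0 = h'(c) = f'(c) \, \D_a^b(g) - g'(c) \, \D_a^b(f),
\]
which is precisely the desired identity.

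There is no real obstacle here; the argument is a routine transcription of the single-variable proof, and the only thing one needs to be careful about is the bookkeeping for the linearity of $\D$, $m_a^b$, and the double derivative, all of which are immediate consequences of the definitions given in the preceding sections.
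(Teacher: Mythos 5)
Your proof is correct and follows essentially the same strategy as the paper: construct an auxiliary function with vanishing double difference and apply the double Rolle's theorem (Proposition \ref{doublerolle}). The one small difference is cosmetic but in your favor: the paper uses $h(x) = f(x) - \D_a^b(f)\,g(x)/\D_a^b(g)$, which forces a preliminary case split on whether $\D_a^b(g)$ vanishes, whereas your choice $h(x) = f(x)\,\D_a^b(g) - g(x)\,\D_a^b(f)$ involves no division and handles both cases uniformly.
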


\begin{proof}
We consider two cases.

\emph{Case 1: $\D_a^b(g) = 0$.} 
By Proposition \ref{doublerolle} there is $c \in (a,b)$
with $g'(c) = 0$. For this $c$ we get that 
$f'(c) \D_a^b(g) = f'(c) \cdot 0 = 0 \cdot \D_a^b(f) 
= g'(c) \D_a^b(f).$

\emph{Case 2: $\D_a^b(g) \neq 0$.}
Consider the map
$h(x) = f(x) - \D_a^b(f) g(x)/\D_a^b(g)$
for $x \in [a,b]$. Then, clearly, 
$\D_a^b(h) = 0$. By Proposition \ref{doublerolle}
there is $c \in (a,b)$ with $h'(c) = 0$
form which the claim follows.
\end{proof}

\begin{rem}
(a) If we specialize $g(x) = x_1 x_2$ in Proposition 
\ref{meanvaluethmcauchy}, then we get 
Proposition \ref{meanvaluethm}.

(b) In \cite{dobrescu1965} the concept of double derivative
(there named \emph{bidimensional derivative}) as well 
as Propositions \ref{doublerolle}-\ref{meanvaluethmcauchy} 
were rediscovered (independently from Bögel it seems) 
by Dobrescu and Siclovan.
\end{rem}

Suppose that $f$ is a double function defined on a 
double interval $I$.
We say that $f$ is  \emph{double increasing
(double decreasing)} on $I$
if $\D_a^b(f) > 0$ ($\D_a^b(f) < 0$)
for all $a,b \in I$ with $a < b$.

\begin{exa}\label{exaD}
Let $a,b,c \in \R^2$ and $D \in \R$. 
Define a double function $f$ on $\R^2$ by 
$f(x) = D(x_1-c_1)(x_2-c_2)$ for $x \in \R^2$.
Then:
\[
\begin{array}{rcl}
\D_a^b(f) 
&=& D(b_1-c_1)(b_2-c_2) - D(b_1-c_1)(a_2-c_2) \\[5pt]
&-& D(a_1-c_1)(b_2-c_2) + D(a_1-c_1)(a_2-c_2) \\[5pt]
&=& D(b_1-a_1)(b_2-a_2).
\end{array}
\] 
Thus, $f$ is double increasing on $\R^2$ 
$\Leftrightarrow$ $D > 0$;
$f$ is double decreasing on $\R^2$ 
$\Leftrightarrow$ $D < 0$;
$f$ is double constant on $\R^2$ $\Leftrightarrow$ $D = 0$.
\end{exa}

\begin{prop}\label{propdoubleconstant}
Suppose that $a,b \in \R^2$ satisfy $a < b$.
Let $f : [a,b] \to \R$ be a double continuous function 
which is double differentiable on $(a,b)$. Then:
\begin{enumerate}[label={\rm (\alph*)}]

\item $f$ is double increasing on $[a,b]$ 
$\Leftrightarrow$
$f'(x) > 0$ for every $x \in (a,b)$;

\item $f$ is double decreasing on $[a,b]$ 
$\Leftrightarrow$
$f'(x) < 0$ for every $x \in (a,b)$;

\item $f$ is double constant on $[a,b]$ 
$\Leftrightarrow$
$f'(x) = 0$ for every $x \in (a,b)$.

\end{enumerate}
\end{prop}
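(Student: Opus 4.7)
My plan follows the classical single-variable template, handling the three equivalences in parallel: the backward directions will come from the double mean value theorem (Proposition \ref{meanvaluethm}) and the forward directions from the limit definition of $f'$. I will argue (a) carefully; statement (b) is then obtained by replacing $f$ with $-f$, and (c) drops out of both sides of the argument with strict inequalities replaced by equalities.

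For the backward implication I fix $a',b' \in [a,b]$ with $a'<b'$ and invoke Proposition \ref{meanvaluethm} on $[a',b']$ to produce $c \in (a',b')$ with
\[
\D_{a'}^{b'}(f) = f'(c)\,(b'_1-a'_1)(b'_2-a'_2).
\]
Since the last factor is strictly positive, the sign of $\D_{a'}^{b'}(f)$ matches that of $f'(c)$, yielding double increase, double decrease, or the double constant conclusion as required. The forward direction of (c) is then immediate: $f$ double constant forces $\D_a^x(f) \equiv 0$, hence $m_a^x(f) \equiv 0$ and $f'(x) = 0$ on $(a,b)$.

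For the forward direction of (a), fix $x \in (a,b)$ and choose $\delta \in \R_{++}^2$ with $D(x,\delta)\subseteq (a,b)$. For $y \in P(x,\delta)$ I would establish $m_x^y(f) > 0$ by a case analysis on which of the four signed punctured neighbourhoods contains $y$. In the $++$ and $--$ cases the corners are already ordered (using $\D_x^y(f) = \D_y^x(f)$ from Proposition \ref{subdivision}(a) in the $--$ case), while in the $+-$ and $-+$ cases I rewrite $\D_x^y(f) = -\D_{(x_1,y_2)}^{(y_1,x_2)}(f)$ via the same proposition so that double increasingness applies to the right-hand side; in each case the sign of the denominator $(y_1-x_1)(y_2-x_2)$ matches the swap count, so $m_x^y(f)>0$ throughout. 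Passing to the double limit yields $f'(x) \geq 0$.

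The main obstacle is upgrading this bound to strict positivity, since a limit of strictly positive quantities need not be positive. The natural remedy is to argue by contradiction: if $f'(x_0) = 0$ at some interior point, apply the double mean value theorem to a small perturbation $g(x) = f(x) - \varepsilon(x_1 - x_{0,1})(x_2 - x_{0,2})$ on a rectangle containing $x_0$ (see Example \ref{exaD} for how the perturbation acts on $\D$), and choose parameters so that the resulting derivative estimate forces $\D_{a'}^{b'}(g)$ to change sign, contradicting strict double increase of $f$ on $[a,b]$. This last step is where I expect the bulk of the genuine effort to lie.
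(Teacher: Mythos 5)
Your handling of the backward implications and of part (c) is correct and is exactly what the paper does (its one-line proof invokes only Proposition~\ref{meanvaluethm}, which gives precisely the ``$\Leftarrow$'' directions of (a), (b) and both directions of (c)). The problem is with the forward directions of (a) and (b): they are not merely hard to prove, they are \emph{false} under the stated hypotheses, so the perturbation/contradiction step you sketch at the end cannot possibly close the gap. Take $f(x_1,x_2)=x_1^3x_2$ on $[-1,1]\times[-1,1]$. Then
\[
\D_a^b(f)=(b_1^3-a_1^3)(b_2-a_2)>0 \quad\text{whenever } a<b,
\]
so $f$ is double increasing. Moreover $f$ is continuous (hence double continuous by Proposition~\ref{contaimpliesdconta}(a)) and, computing directly, $m_a^b(f)=b_1^2+a_1b_1+a_1^2\to 3a_1^2$ as $b\leadsto a$, so $f$ is double differentiable with $f'(x)=3x_1^2$, which vanishes on the entire segment $\{0\}\times[-1,1]$. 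This is the double analogue of the single-variable fact that $x\mapsto x^3$ is strictly increasing with $f'(0)=0$: strict monotonicity does not force a strictly positive derivative.

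Your careful case analysis on the four signed punctured neighbourhoods is fine as far as it goes, but it correctly delivers only $f'(x)\geq 0$ (resp.\ $\leq 0$), and that bound is sharp. No choice of perturbation $g(x)=f(x)-\varepsilon(x_1-x_{0,1})(x_2-x_{0,2})$ can generate a contradiction, because for the counterexample above there is nothing to contradict. You should instead observe that the paper's ``$\Leftrightarrow$'' in (a) and (b) is an overstatement: the proofs it supplies (and the only places it later uses this proposition, namely in Propositions~\ref{doubleconstant} and~\ref{propsigns}) require only the ``$\Leftarrow$'' directions, while the honest ``$\Rightarrow$'' conclusion is $f'(x)\geq 0$ (resp.\ $f'(x)\leq 0$) for $x\in(a,b)$.
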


\begin{proof}
This follows immediately from Proposition \ref{meanvaluethm}. 
\end{proof}

\begin{rem}
Consider the function $f$ defined in Example \ref{exaD}.
Then, by Proposition \ref{schwarztheorem}, it follows that
$f$ is double differentiable with
$f'(x) = D$ for $x \in \R^2$.
Thus, the conclusions in this example follow
from Proposition \ref{propdoubleconstant}.
\end{rem}

Suppose that $f$ is a double function defined on 
a double open interval $I$. Let $a \in I$.
We say that $a$ is a \emph{double maximum point
(double minumum point)} for $f$ on $I$ 
if $\D_a^b(f) < 0$ ($\D_a^b(f) > 0$) for all $b \in I$
with $b \nsim a$.
We say that $a$ is a 
\emph{double extreme point} for $f$ on $I$
if $a$ is a double minumum point or a double 
maximum point for $f$ on $I$

\begin{exa}\label{exampledouble}
Let $a,b,c \in \R^2$ and $D \in \R$. 
Define a double function $f$ on $\R^2$ by 
$f(x) = D(x_1-c_1)^2(x_2-c_2)^2$ for $x \in \R^2$
where $D$ is a non-zero real number.
Then:
\[
\begin{array}{rcl}
\D_a^b(f) 
&=& D(b_1-c_1)^2(b_2-c_2)^2 - D(b_1-c_1)^2(a_2-c_2)^2 \\[5pt]
&-& D(a_1-c_1)^2(b_2-c_2)^2 + D(a_1-c_1)^2(a_2-c_2)^2 \\[5pt]
&=& D(b_1-a_1)(b_2-a_2)(b_1+a_1-2c_2)(b_2+a_2-2c_2).
\end{array}
\] 
Suppose now that $a$ is a double extreme point for $f$
on $\R^2$. 
The above calculation shows that $\D_a^{2c-a} = 0$.
Therefore $a = 2c - a$ and thus $a = c$.
Hence $\D_a^b(f) = D (b_1-a_1)^2 (b_2-a_2)^2$.
Thus, $a$ 
is a double maximum (minimum) point for $f$ on $\R^2$
if and only if $a = c$ and $D < 0$ ($D > 0$).
\end{exa}

Suppose that $f$ is a double function which is defined on a
double open interval $I$ and let $a \in I$.
If $f$ is double differentiable at $a$
and $f'(a)=0$, then $f$ we say that $a$
is a \emph{double stationary point} for $f$.
We say that $a$ is a \emph{double a critical point for $f$} 
if either $f$ is not double differentiable at $a$
or $a$ is a stationary point for $f$.

\begin{prop}[Double Fermat's theorem]\label{propfermat}
Suppose that $f$ is a double function which is
defined on a double open interval $I$ and let $a \in I$.
If $a$ is a double extreme point for $f$ on $I$,
then $a$ is a double critical point for $f$.
\end{prop}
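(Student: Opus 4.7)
The plan is to split into the trivial case and the substantive one. If $f$ is not double differentiable at $a$, then $a$ is a double critical point by definition, so there is nothing to prove. Hence the whole argument reduces to showing that whenever $f$ is double differentiable at a double extreme point $a$, one must have $f'(a)=0$.

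Assume, without loss of generality, that $a$ is a double maximum point (the minimum case is completely symmetric, with the direction of every inequality reversed). Pick $\delta \in \R_{++}^2$ small enough that $D(a,\delta) \subseteq I$; then for every $b \in P(a,\delta)$ we have $b \nsim a$, so by hypothesis $\D_a^b(f) < 0$. The key observation is simply that the sign of the denominator $(b_1-a_1)(b_2-a_2)$ of $m_a^b(f)$ depends on which quadrant $b$ lies in relative to $a$: it is positive on $P_{++}(a,\delta) \cup P_{--}(a,\delta)$ and negative on $P_{+-}(a,\delta) \cup P_{-+}(a,\delta)$. Consequently, $m_a^b(f) < 0$ for $b$ in the first or third signed punctured neighbourhood, and $m_a^b(f) > 0$ for $b$ in the second or fourth.

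Since $f$ is double differentiable at $a$, the double limit $\lim_{x \leadsto a} m_a^x(f) = f'(a)$ exists. Restricting the limit to sequences in $P_{++}(a,\delta)$ yields $f'(a) \leq 0$, and restricting to sequences in $P_{+-}(a,\delta)$ yields $f'(a) \geq 0$. Therefore $f'(a)=0$, so $a$ is a double stationary point, hence a double critical point.

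I do not foresee a real obstacle here: once one notices that the denominator of $m_a^b(f)$ changes sign across quadrants while the numerator has a fixed sign on a punctured double neighbourhood of an extremum, the conclusion is forced by taking the double limit along two opposite signed neighbourhoods. The only subtlety worth flagging is that the definition of double extreme point excludes points $b \sim a$, which is exactly why the relevant punctured neighbourhoods $P_{\pm\pm}(a,\delta)$ are the natural objects to evaluate the limit on.
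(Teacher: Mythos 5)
Your proof is correct and takes essentially the same approach as the paper: both argue that since the double limit defining $f'(a)$ exists, one can evaluate it along two adjacent signed punctured neighbourhoods (e.g.\ $P_{++}$ and $P_{+-}$), where the denominator $(b_1-a_1)(b_2-a_2)$ has opposite signs while the numerator $\D_a^b(f)$ has a fixed sign, forcing $f'(a)$ to be simultaneously $\le 0$ and $\ge 0$. The only cosmetic difference is that you spell out the (trivial) non-differentiable case and take the maximum point as your representative case rather than the minimum.
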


\begin{proof}
Suppose that $f$ is double differentiable at $a$
and that $a$ is a double minimum point for $f$ on $I$.
Then $\D_a^b(f) > 0$ for all $b$ with $b \nsim a$.
Hence:
\[
f'(a) \quad = \lim_{\quad b \leadsto a^{++}} m_a^b(f)
\quad = 
 \lim_{\quad b \leadsto a^{++}} 
\frac{ \Delta_a^b(f) }{(b_1-a_1)(b_2-a_2)} \geq 0
\]
and:
\[
f'(a) \quad = \lim_{\quad b \leadsto a^{+-}} m_a^b(f) 
\quad = 
\lim_{\quad b \leadsto a^{+-}}
\frac{ \Delta_a^b(f) }{(b_1-a_1)(b_2-a_2)} \leq 0.
\]
Thus $f'(a) = 0$.
The proof is analogous in the case when $f$ is double
differentiable at $a$ and $a$ is a double maximum 
point for $f$ on $I$. 
\end{proof}

\begin{prop}[Double first derivative test]\label{propsigns}
Suppose that $f$ is a double \linebreak function which is
double differentiable on a double 
open interval $(a,b)$.
Let $c \in (a,b)$ be a double stationary point for $f$.
\begin{enumerate}[label={\rm (\alph*)}]

\item Suppose that $f'(x) < 0$, when
$a < x < c$ or $c < x < b$, and $f'(x) > 0$ 
when $(a_1,c_2) < x < (c_1,b_2)$ or
$(c_1,a_2) < x < (b_1,c_2)$. Then $c$ is a double maximum
point for $f$ on $(a,b)$.

\item Suppose that $f'(x) > 0$, when
$a < x < c$ or $c < x < b$, and $f'(x) < 0$ when
$(a_1,c_2) < x < (c_1,b_2)$ or
$(c_1,a_2) < x < (b_1,c_2)$. Then $c$ is a double minimum
point for $f$ on $(a,b)$.

\item If $f'$ has the same sign throughout the 
formulation of the statement 
in (a) (or in (b)), then $c$ is neither a double maximum
point nor a double minimum point for $f$ on $(a,b)$.

\end{enumerate}
\end{prop}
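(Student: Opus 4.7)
The plan is to verify directly the defining inequality for $c$ being a double maximum (or minimum) point by applying the double Lagrange mean value theorem (Proposition \ref{meanvaluethm}) to $f$ on a suitable sub-rectangle whose interior lies in a single one of the four open quadrants around $c$, and then converting the resulting information about the mean slope into a sign statement on $\D_c^{b'}(f)$.

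For part (a), fix $b' \in (a,b)$ with $b' \nsim c$ and split into four cases according to which quadrant contains $b'$. In the ``aligned'' cases $b' > c$ and $b' < c$, the closed double interval with corners $c$ and $b'$ lies in $(a,b)$, so Proposition \ref{meanvaluethm} (combined, when $b' < c$, with $\D_c^{b'}(f) = \D_{b'}^c(f)$ from Proposition \ref{subdivision}(a)) produces $\xi$ in the corresponding open quadrant with $\D_c^{b'}(f) = f'(\xi)(b'_1 - c_1)(b'_2 - c_2)$. Since $f'(\xi) < 0$ in these quadrants and the product of coordinate differences is positive, I get $\D_c^{b'}(f) < 0$. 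In the two ``anti-aligned'' cases, I set $\alpha$ and $\beta$ to be the corners of the rectangle ordered so that $\alpha < \beta$. Then the interior of $[\alpha,\beta]$ lies in the corresponding anti-aligned quadrant, where $f' > 0$ by hypothesis, so the mean value theorem gives $\D_\alpha^\beta(f) > 0$; the sign-reversal identity $\D_c^{b'}(f) = -\D_\alpha^\beta(f)$ from Proposition \ref{subdivision}(a) then yields $\D_c^{b'}(f) < 0$. Thus $c$ is a double maximum point on $(a,b)$.

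Part (b) will follow by the same argument with all signs reversed, giving $\D_c^{b'}(f) > 0$ in every quadrant. For part (c), if $f'$ has a single sign across all four quadrants, the same four-case analysis shows that $\D_c^{b'}(f)$ takes one sign when $b'$ lies in the upper-right or lower-left quadrant and the opposite sign when $b'$ lies in the upper-left or lower-right quadrant; hence $c$ is neither a double maximum nor a double minimum point on $(a,b)$.

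The main obstacle I foresee is not conceptual but notational: one must carefully track which pair of corners to designate as $\alpha < \beta$ in the anti-aligned cases and then correctly combine the mean value theorem with the sign-reversal identity from Proposition \ref{subdivision}(a) so that the resulting inequality points in the intended direction.
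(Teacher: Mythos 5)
Your proof is correct and follows essentially the same strategy as the paper. The only difference is cosmetic: the paper invokes the monotonicity characterization (Proposition \ref{propdoubleconstant}), itself a one-line corollary of the double mean value theorem, to conclude that $f$ is double decreasing on the aligned quadrants and double increasing on the anti-aligned ones, whereas you apply Proposition \ref{meanvaluethm} directly to each sub-rectangle spanned by $c$ and the test point $b'$. Your explicit handling of the anti-aligned cases, ordering the corners as $\alpha < \beta$ and then applying the sign-reversal identity $\D_c^{b'}(f) = -\D_{\alpha}^{\beta}(f)$ from Proposition \ref{subdivision}(a), is in fact more careful than the paper's own proof, which at this step cites Proposition \ref{subdivision}(c) and records a sign on $\D_x^c(f)$ that, read literally, does not match the definition of a double maximum point; the step the paper intends is precisely the corner-ordering and sign-flip argument you spell out.
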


\begin{proof}
(a) Take $x \in (a,b)$ with $x \nsim c$.
By Proposition \ref{propdoubleconstant}(b)
it follows that $f$ is double decreasing on the 
intervals $(a,c)$ and $(c,b)$. 
Thus $\D_x^c(f) < 0$ for all $x$ in those intervals.
By Proposition \ref{propdoubleconstant}(a)
it follows that $f$ is double increasing on the 
intervals $((a_1,c_2),(c_1,b_2))$ and 
$((c_1,a_2),(b_1,c_2))$. 
By Lemma \ref{subdivision}(c) it follows that
$\D_x^c(f) > 0$ for all $x$ in
those intervals. 
The proofs of (b) and (c) are similar to the proof of (a).
\end{proof}

\begin{rem}
One can reach the
conclusion in Example \ref{exampledouble},
using the double derivative. 
Namely, let $a$ $b$, $c$, $D$ and $f$ 
be defined as in that example.
By Proposition \ref{schwarztheorem}, it follows that
$f$ is double differentiable with
$f'(x) = 4D(x_1-c_1)(x_2-c_2)$ for $x \in \R^2$.

Suppose that $a$ is a double maximum point for $f$
on $\R^2$. 
By Proposition \ref{propfermat} we get that
$f'(a) = 0$, that is $4D(a_1-c_1)(a_1-c_1) = 0$.
Therefore $a_1 = c_1$ or $a_2 = c_2$.
We consider the case when $a_1 = c_1$
(the case when $a_2 = c_2$ reaches the same conclusion)
so that $f'(x) = 4D(x_1-a_1)(x_2-c_2)$.
By Proposition \ref{propsigns} $f'(x) < 0$ for large 
enough $x$. Thus $D < 0$. Also, by the same proposition,
if $x_1 \neq a_1$, then  $f'(x)$ should change sign 
as $x_2$ goes from a value
less than $a_2$ to a value larger than $a_2$.
Hence $c_2 = a_2$ so that 
$f'(x) = 4D(x_1-a_1)(x_2-a_2)$.
Using this and Proposition \ref{propsigns},
it is easy to see that $a$ now is a double maximum point
for $f$ on $\R^2$. 

A similar analysis reveals that $a$ is double minimum
point for $f$ on $\R^2$ if and only if $a = c$ and $D > 0$.
\end{rem}

\begin{rem}
To the best of our knowledge,
Bögel neither treats double critical points
nor a double Fermat's theorem.
However, in \cite[\S 6]{bogel1935} Bögel studies
\emph{monotone} double functions in the context of 
functions of bounded variation.
\end{rem}

Suppose that $f$ is a double function defined on a
double interval $I$.
We say that a double function $F$ defined on $I$ is a 
\emph{double primitive function of $f$ on $I$} 
if $F$ is double differentiable on $I$ and 
$F'(x) = f(x)$ for $x \in I$. 

\begin{prop}\label{doubleconstant}
Suppose that $f$ is a double function defined on a
double interval $I$.
If $F$ and $G$ are double primitive functions of $f$
on $I$, then there is a double constant function $H$,
defined on $I$, such that $G = F + H$.
\end{prop}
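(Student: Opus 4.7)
The plan is to consider the difference $H := G - F$, show that $H$ has double derivative identically zero on $I$, and then invoke Proposition \ref{propdoubleconstant}(c) to conclude that $H$ is double constant on $I$. Once this is established, the relation $G = F + H$ gives the desired representation.

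First I would observe that the double difference is linear in its function argument, so $\D_a^b(H) = \D_a^b(G) - \D_a^b(F)$ for all $a, b \in I$. Dividing by $(b_1 - a_1)(b_2 - a_2)$ when $a \nsim b$ and passing to the limit (using linearity of double limits) yields $H'(a) = G'(a) - F'(a) = f(a) - f(a) = 0$ for every $a \in I$. In particular $H$ is double differentiable on $I$, so by Proposition \ref{ddiffimpliesdcont} it is also double continuous on $I$.

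Next I would show that $\D_a^b(H) = 0$ for every pair $a, b \in I$. For the main case $a < b$, the restriction of $H$ to $[a,b] \subseteq I$ satisfies the hypotheses of Proposition \ref{propdoubleconstant}(c), yielding $\D_a^b(H) = 0$. The degenerate situations $a = b$ and $a \sim b$ collapse by direct cancellation in the definition of $\D$, and $b < a$ is symmetric via $\D_a^b(H) = \D_b^a(H)$ from Proposition \ref{subdivision}(a). The remaining case is that $a \nsim b$ with $a, b$ incomparable, say $a_1 < b_1$ and $a_2 > b_2$; here Proposition \ref{subdivision}(a) gives $\D_a^b(H) = -\D_{(a_1, b_2)}^{(b_1, a_2)}(H)$, and the new pair satisfies $(a_1, b_2) < (b_1, a_2)$, reducing to the comparable case already handled.

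The main obstacle, such as it is, is this last bookkeeping step: Proposition \ref{propdoubleconstant}(c) is stated only for ordered pairs $a < b$, so one must verify that arbitrary (possibly incomparable) pairs in $I$ are covered. This is mechanical once Proposition \ref{subdivision}(a) is invoked, so the real substance of the argument is the linearity of $\D$ together with the previously established characterization of double constant functions via the vanishing of the double derivative.
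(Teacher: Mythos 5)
Your proof follows the paper's argument exactly: set $H := G - F$, observe $H' = G' - F' = 0$ by linearity, and invoke Proposition \ref{propdoubleconstant}(c) to conclude $H$ is double constant. The extra case analysis you carry out (arbitrary, possibly incomparable or degenerate pairs $a, b \in I$, reduced to the ordered case via Proposition \ref{subdivision}(a)) is a correct and worthwhile elaboration of a step the paper leaves implicit, but it does not change the underlying approach.
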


\begin{proof}
Put $H = G - F$. Then $H' = G' - F' = f-f = 0$.
Proposition~\ref{propdoubleconstant}(c) implies that
$H$ is a double constant function.
Clearly $G = F + H$.
\end{proof}

\section{Double integrability}\label{sec:doubleintegrability}

In this section, we define the double Newton integral.
Using the double mean value theorem, we 
obtain a mean value theorem for double 
Newton integrals (see Proposition \ref{meannewton}).
After that, we connect the double Newton \linebreak integral
to the Riemann double integral in 
the first and second double \linebreak fundamental 
theorems of calculus (see Proposition \ref{firstftc}
and Proposition \ref{secondftc}).
At the end of this section, we introduce
improper double Newton integrals.
We also discuss examples of double integrals over
non-rectangular regions.
Most of the material in this section (except the discussion
on improper \linebreak integrals) can be extracted
and specialized from~\cite{bogel1935}. 
However, since we only restrict
ourselves to the double calculus, our presentation
can be more streamlined.

Let $f$ be a double function defined on a
double interval $I$.
Suppose that there exists a double primitive function
$F$ of $f$ on $I$.
Let $a,b \in I$. We
say that the \emph{double Newton integral
of $f$ from $a$ to $b$} is the real number: 
\begin{equation}\label{defNewton}
\int_a^b f := \D_a^b(F).
\end{equation}

\begin{prop}[The double Newton integral is well defined]
The value of \eqref{defNewton} does not depend 
on the choice of the double primitive function.
\end{prop}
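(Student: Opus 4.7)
The plan is to reduce well-definedness to the characterization of double constant functions already in hand. Suppose $F$ and $G$ are both double primitive functions of $f$ on $I$. I would need to show that $\D_a^b(F) = \D_a^b(G)$ for every $a,b \in I$.

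First I would invoke Proposition \ref{doubleconstant} to obtain a double constant function $H$ on $I$ such that $G = F + H$. Then I would observe that the double difference operator $\D_a^b$ is linear in its argument; this is immediate from the defining formula $\D_a^b(f) = f(b_1,b_2) - f(b_1,a_2) - f(a_1,b_2) + f(a_1,a_2)$, since each term is evaluated at a fixed point. Hence $\D_a^b(G) = \D_a^b(F) + \D_a^b(H)$.

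Since $H$ is double constant on $I$, the very definition of double constancy gives $\D_a^b(H) = 0$ for all $a,b \in I$. Combining this with the previous identity yields $\D_a^b(G) = \D_a^b(F)$, which is exactly the required independence.

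There is essentially no obstacle here, since all the substantive work has been packaged into Proposition \ref{doubleconstant} (which in turn rests on the double mean value theorem via Proposition \ref{propdoubleconstant}(c)); the remainder is a one-line algebraic verification using linearity of $\D$ and the definition of \emph{double constant}.
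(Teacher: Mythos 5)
Your proposal is correct and follows exactly the same route as the paper's own proof: apply Proposition \ref{doubleconstant} to write $G = F + H$ with $H$ double constant, use linearity of $\D_a^b$, and then invoke the definition of double constancy to kill the $\D_a^b(H)$ term. There is nothing to add.
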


\begin{proof}
Let $f,F,G$ be double functions defined on $I$
where $F$ and $G$ are double primitive functions of $f$
on $I$. By Proposition \ref{doubleconstant},
$G = F + H$ for some double constant function
$H$ defined on $I$. Take $a,b \in I$. Then
it follows that
$\D_a^b(G) = \D_a^b(F) + \D_a^b(H) = \D_a^b(F) + 0 =
\D_a^b(F).$
\end{proof}

\begin{exa}
Suppose that $F$ and $f$ are the double functions
defined on $I :=[0,2] \times [1,3]$ by
$F(x) = x_1^2 x_2^3/2$ and 
$f(x) = 3 x_1 x_2^2$ for $x \in I$. 
By Proposition \ref{schwarztheorem},
$F$ is double differentiable on $I$ 
with $F' = f$. Therefore:
\[
\displaystyle{\int_{(0,1)}^{(2,3)} f } \ = \ 
\D_{(0,1)}^{(2,3)}(F) 
\ = \ F(2,3) - F(2,1) - F(0,3) + F(0,1) \ = \ 52. 
\]
\end{exa}

\begin{prop}[Properties of the double Newton integral]\label{propertiesintegral}
Let $f$ be a double function defined on a 
double interval~$I$.
Suppose that there exists a double primitive function $F$
of $f$ on $I$. If $a,b,c \in I$, then:
\begin{enumerate}[label={\rm (\alph*)}]

\item $\displaystyle{ \int_a^a f = 0 }$; \ 
$\displaystyle{ \int_a^b f = \int_b^a f }$; \ 
$\displaystyle{ \int_a^b f =-\int_{(a_1,b_2)}^{(b_1,a_2)} f }$;

\item $\displaystyle{ \int_a^b f = \int_a^{(c_1,b_2)} f + 
\int_{(c_1,a_2)}^b f }$;

\item $\displaystyle{ \int_a^b f = \int_a^{(b_1,c_2)} f + 
\int_{(a_1,c_2)}^b f }$;

\item $\displaystyle{ \int_a^b f = 
\int_a^c f + 
\int_c^b f + 
\int_{(a_1,c_2)}^{(c_1,b_2)} f +  
\int_{(c_1,a_2)}^{(b_1,c_2)} f }$.

\end{enumerate}
\end{prop}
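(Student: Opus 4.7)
The plan is to invoke the definition of the double Newton integral and reduce each of the four properties to the corresponding property of the double difference operator established earlier. Since $f$ admits a double primitive function $F$ on $I$, the definition \eqref{defNewton} gives $\int_a^b f = \D_a^b(F)$, and likewise for every other integral appearing in the statement the same $F$ can be used as primitive (the integrand is $f$ throughout, so the well-definedness result guarantees we are free to pick a single $F$ across all the integrals that occur).

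With this identification, parts (a)--(d) become verbatim restatements of parts (a)--(d) of Proposition \ref{subdivision} applied to $F$: for instance, for (a), $\int_a^a f = \D_a^a(F) = 0$, $\int_a^b f - \int_b^a f = \D_a^b(F) - \D_b^a(F) = 0$, and similarly for the third identity; for (b), Proposition \ref{subdivision}(b) applied at the splitting point $(c_1, c_2)$ (only the first coordinate of $c$ actually enters in that identity) yields $\D_a^b(F) = \D_a^{(c_1,b_2)}(F) + \D_{(c_1,a_2)}^b(F)$, which translates immediately into the integral identity; (c) and (d) are obtained the same way from Proposition \ref{subdivision}(c) and (d).

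There is essentially no obstacle here — the only thing worth checking is that all the auxiliary points appearing in (a)--(d), such as $(a_1,b_2)$, $(b_1,a_2)$, $(c_1,b_2)$, $(a_1,c_2)$, $(c_1,a_2)$ and $(b_1,c_2)$, lie in the double interval $I$ so that $F$ is a valid primitive of $f$ at each of them; this is automatic because $I$ is a double interval (a product of intervals) and the coordinates of these points are drawn from the coordinates of $a$, $b$, and $c$, all of which lie in $I$. Thus the proof reduces to a single sentence: apply Proposition \ref{subdivision} to the primitive $F$ and translate via \eqref{defNewton}.
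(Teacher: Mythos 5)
Your proposal is correct and matches the paper's approach exactly: the paper's proof is the one-liner ``This follows immediately from Proposition \ref{subdivision},'' which is precisely your reduction of each integral identity to the corresponding double-difference identity for the primitive $F$ via \eqref{defNewton}. The extra remarks about well-definedness and membership of the auxiliary points in $I$ are sound but are left implicit in the paper.
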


\begin{proof}
This follows immediately from Proposition \ref{subdivision}. 
\end{proof}

\begin{prop}[The double Newton integral is a primitive function]\label{Dnewtonfunction} 
Let $f$ be a double function defined on a 
double interval~$I$.
Suppose that there exists a double primitive function $F$
of $f$ on $I$. Take $a \in I$ and define the 
map $G : I \to \R$ by $G(x) \mapsto \int_a^x f$ for $x \in I$.
\begin{enumerate}[label={\rm (\alph*)}]

\item The identity $\displaystyle{ \D_b^x(G) = \D_b^x(F) }$
holds for all $b,x \in I$.

\item The function $G$ is double differentiable on $I$ and
$\displaystyle{ G'(b) = f(b) }$ for $b \in I$.

\end{enumerate}
\end{prop}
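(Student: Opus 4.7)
The plan is to derive part (b) as an immediate consequence of part (a), so almost all of the work goes into (a).

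For part (a), my plan is to unfold the definition of $G$ directly. By definition, for each $y \in I$ we have $G(y) = \D_a^y(F) = F(y_1,y_2) - F(y_1,a_2) - F(a_1,y_2) + F(a_1,a_2)$. Substituting this for each of the four values $G(x_1,x_2)$, $G(x_1,b_2)$, $G(b_1,x_2)$, $G(b_1,b_2)$ that enter $\D_b^x(G)$ produces an alternating sum of sixteen values of $F$. A careful bookkeeping shows that every term involving $a_1$ or $a_2$ is picked up with coefficients that sum to zero (for instance, $F(x_1,a_2)$ appears with coefficient $-1$ from $G(x_1,x_2)$ and $+1$ from $-G(x_1,b_2)$, and similarly for the other six such terms), while the four surviving terms are precisely $F(x_1,x_2) - F(x_1,b_2) - F(b_1,x_2) + F(b_1,b_2) = \D_b^x(F)$. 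An equivalent but slightly shorter route is to invoke Proposition~\ref{subdivision}(d) applied to $F$ with the corners $b$ and $x$ subdivided by $a$, which rearranges to the desired identity; either way the computation is purely algebraic.

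For part (b), fix $b \in I$ and take $x \leadsto b$ with $x \nsim b$. From part (a), $\D_b^x(G) = \D_b^x(F)$, so dividing by $(x_1-b_1)(x_2-b_2)$ gives $m_b^x(G) = m_b^x(F)$. Since $F$ is a double primitive of $f$ on $I$, the right-hand side tends to $F'(b) = f(b)$ as $x \leadsto b$. Therefore the double limit $\lim_{x \leadsto b} m_b^x(G)$ exists and equals $f(b)$, which is the definition of $G'(b) = f(b)$. (At boundary points of $I$ the same argument works verbatim with the appropriate signed double limits.)

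The only real obstacle is making sure the expansion in (a) is carried out with the correct signs; once that identity is in hand, part (b) is a one-line consequence. I would present (a) by writing out the four values of $G$ as sums of four $F$-values each and then showing the twelve $a$-indexed contributions cancel in pairs, which is both elementary and transparent.
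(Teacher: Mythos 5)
Your proof is correct and follows essentially the same route as the paper: part (a) is the same algebraic cancellation, which the paper expresses via the integral identities in Proposition~\ref{propertiesintegral} rather than by expanding all sixteen $F$-values directly, and part (b) is the identical one-line consequence. (The paper also notes an alternative proof of (b): $G$ differs from $F$ by the double constant function $(x_1,x_2)\mapsto F(a_1,a_2)-F(x_1,a_2)-F(a_1,x_2)$, which has zero double derivative by Proposition~\ref{propsplit}.)
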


\begin{proof}
Take $a,b,x \in I$.
First we show (a). By Proposition 
\ref{propertiesintegral} we get that:
\[
\begin{array}{rcl}

\D_b^x(G) & = & \displaystyle{ \int_a^x f + \int_a^b f -
\int_a^{(x_1,b_2)} f - \int_a^{(b_1,x_2)} f } \\[10pt]
& = & \displaystyle{ \int_a^x f + \int_b^a f +
\int_{(a_1,b_2)}^{(x_1,a_2)} f + 
\int_{(a_1,x_2)}^{(b_1,a_2)} f } 
\ = \ \displaystyle{ \int_b^x f \ = \ \Delta_b^x(F) }.
\end{array}
\]
Next we show (b). By (a) we get that:
\[
\begin{array}{rcl}
G'(b) & = & \displaystyle{ \lim_{x \leadsto b} m_b^x(G) } 
\ = \ \displaystyle{ \lim_{x \leadsto b} 
\frac{ \D_b^x(G) }{ (x_1-b_1)(x_2-b_2) } } \\[10pt] 
& = & \displaystyle{ \lim_{x \leadsto b} 
\frac{ \D_b^x(F) }{ (x_1-b_1)(x_2-b_2) } } 
\ = \ \displaystyle{ \lim_{x \leadsto b} 
m_b^x(F) \ = \ F'(b) \ = \ f(b). }
\end{array}
\]
Alternatively, the equality $G'=f$ follows from the 
fact that $G$ equals $F$ plus the double constant function
$I \ni (x_1,x_2) \mapsto F(a_1,a_2)-F(x_1,a_2)-F(a_1,x_2)$
which by Proposition \ref{propsplit} has zero double
derivative.  
\end{proof}

\begin{prop}[The mean value theorem for double Newton integrals]\label{meannewton}
Let $f$ be a double function defined 
on a double interval $[a,b]$ for some 
$a,b \in \R^2$ with $a < b$.
Suppose that
there exists a double primitive function $F$ of $f$
on $[a,b]$.
Then there exists $c \in (a,b)$ such that:
\[
\int_a^b f = f(c) (b_1-a_1)(b_2-a_2).
\]
\end{prop}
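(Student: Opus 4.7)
The plan is to reduce the statement directly to the double Lagrange mean value theorem (Proposition \ref{meanvaluethm}) applied to the primitive $F$. Since $F$ is a double primitive function of $f$ on $[a,b]$, it is by definition double differentiable on $[a,b]$ with $F' = f$; in particular, $F$ is double differentiable on $(a,b)$, and by Proposition \ref{ddiffimpliesdcont} it is double continuous on $[a,b]$. So $F$ satisfies the hypotheses of the double Lagrange mean value theorem.

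Applying Proposition \ref{meanvaluethm} to $F$, I obtain some $c \in (a,b)$ with $F'(c) = m_a^b(F)$, i.e.,
\[
f(c) \;=\; F'(c) \;=\; \frac{\Delta_a^b(F)}{(b_1-a_1)(b_2-a_2)}.
\]
Rearranging gives $\Delta_a^b(F) = f(c)(b_1-a_1)(b_2-a_2)$. By the definition \eqref{defNewton} of the double Newton integral, the left-hand side equals $\int_a^b f$, which is the required conclusion.

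There is essentially no obstacle here: the double primitive hypothesis packages together exactly the regularity needed to invoke Proposition \ref{meanvaluethm}, and the identification of $\int_a^b f$ with $\Delta_a^b(F)$ is by definition. The only thing worth noting is that the subtle boundary/signed-differentiability conventions play no role, since we only use double differentiability on the open interval $(a,b)$ and double continuity on the closed interval $[a,b]$, both of which are automatic from the existence of the double primitive.
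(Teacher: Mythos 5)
Your proof is correct and takes essentially the same route as the paper: establish that the double primitive $F$ satisfies the hypotheses of the double Lagrange mean value theorem (using Proposition~\ref{ddiffimpliesdcont} for double continuity on $[a,b]$), apply Proposition~\ref{meanvaluethm} to get $c\in(a,b)$ with $F'(c)=m_a^b(F)$, and unwind the definitions. The paper's one-line proof additionally cites Proposition~\ref{Dnewtonfunction}, presumably because the author had in mind applying the mean value theorem to $G(x)=\int_a^x f$ and then invoking $G'=f$ and $\Delta_b^x(G)=\Delta_b^x(F)$; your version applies the mean value theorem directly to $F$ and uses only the definitional identity $\int_a^b f=\Delta_a^b(F)$, which is a slight streamlining that makes that third citation unnecessary.
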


\begin{proof}
This follows from Propositions \ref{ddiffimpliesdcont}, 
\ref{meanvaluethm} and \ref{Dnewtonfunction}.
\end{proof}

We now recall some classical notions (cf. e.g. \cite{courant}).
Suppose that $f$ is a double function defined 
on a double interval $[a,b]$ for some 
$a,b \in \R^2$ with $a < b$.
A \emph{partition} $P$ of $[a,b]$
is a choice of points
$x_{1,0},x_{1,1},\ldots,x_{1,m} \in [a_1,b_1]$ and 
$x_{2,0},x_{2,1},\ldots,x_{2,n} \in [a_2,b_2]$ such that
$
a_1 = x_{1,0} < x_{1,1} < \cdots < x_{1,m-1} < 
x_{1,m} = b_1
$
and
$
a_2 = x_{2,0} < x_{2,1} < \cdots < x_{2,n-1} < 
x_{2,n} = b_2.
$
Given $P$, define the $mn$ rectangles 
$R_{ij} = [x_{1,i-1},x_{1,i}] \times [x_{2,j-1},x_{2,j}]$, 
for $1 \leq i \leq m$ and $1 \leq j \leq n$.
The \emph{norm} $|P|$ of $P$ is the largest of the diagonals
in these $mn$ rectangles.
Pick an arbitrary point 
$(x_{1,i,j}^* , x_{2,i,j}^*)$ in each 
of the rectangles $R_{ij}$.
For all $i$ and $j$ put
$\D x_{1,i} = x_{1,i} - x_{1,i-1}$ and
$\D x_{2,j} = x_{2,j} - x_{2,j-1}$.
The corresponding \emph{double Riemann sum} is defined as
$
R(f,P) := \sum_{i=1}^m \sum_{j=1}^n 
f( x_{1,i,j}^* , x_{2,i,j}^* ) \D x_{1,i} \D x_{2,j}.
$
The double function $f$ is said to be 
\emph{Riemann integrable} over $[a,b]$ and have 
\emph{double integral}
$
I = \iint_{[a,b]} f(x) \ dx_1 dx_2,
$
if for every $\epsilon \in \R_+$ there is 
$\delta \in \R_+$ such that 
$| R(f,P) - I | < \epsilon$ holds for every 
partition $P$ of $[a,b]$ satisfying $|P| < \delta$
and for all choices of $( x_{1,i,j}^* , x_{2,i,j}^* )$
in the subrectangles of $P$.
If $f$ is continuous on $[a,b]$, then $f$ is 
Riemann integrable over $[a,b]$ 
(see e.g. \cite[p. 293]{courant}).

\begin{prop}[The mean value theorem for double Riemann integrals]\label{propcourant}
Let $a,b \in \R^2$ and $a < b$.
Suppose that $f : [a,b] \to \R$ is a double function
which is continuous.
Then there exists $c \in (a,b)$ such that: 
\[
\iint_{[a,b]} f(x) \ dx_1 dx_2 = f(c)(b_1-a_1)(b_2-a_2).
\]
\end{prop}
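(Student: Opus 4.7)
The plan is to reduce the theorem to finding a point $c \in (a,b)$ at which $f$ takes the average value $\bar f := A^{-1}\iint_{[a,b]} f(x)\,dx_1 dx_2$, where $A := (b_1-a_1)(b_2-a_2)$. Because $f$ is (ordinarily) continuous on the compact rectangle $[a,b]$, the classical Weierstrass extreme value theorem applies directly---this is legitimate here, unlike in the double-continuity framework of Section \ref{sec:doublecontinuity}, because we are given genuine continuity---and supplies a minimum $m$ and a maximum $M$ of $f$. Integrating the pointwise bound $m \leq f \leq M$ over $[a,b]$ will give $m \leq \bar f \leq M$ at once.

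I would then split into cases. If $m = M$, the function is constant and any $c \in (a,b)$ does the job. Otherwise $m < M$, and the next step is to rule out $\bar f = m$: the identity $\iint_{[a,b]}(f-m)\,dx_1 dx_2 = 0$ with $f - m \geq 0$ continuous forces $f \equiv m$ on $[a,b]$ (a point where $f - m$ were strictly positive would, by continuity, contribute a positive amount to the integral), contradicting $M > m$; the case $\bar f = M$ is symmetric. Hence $m < \bar f < M$.

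With these strict inequalities in hand, I would locate $p, q \in (a,b)$ with $f(p) < \bar f < f(q)$: by continuity the sublevel set $\{x \in [a,b] : f(x) < \bar f\}$ and the superlevel set $\{x \in [a,b] : f(x) > \bar f\}$ are nonempty and open in $[a,b]$, and they therefore meet the dense open rectangle $(a,b)$. Following the convex line-segment argument already used in the proof of Proposition \ref{propintermeiate}, the segment $L$ from $p$ to $q$ lies in the convex set $(a,b)$, $f(L)$ is a connected subset of $\R$ and hence an interval containing $f(p)$ and $f(q)$, so there exists $c \in L \subseteq (a,b)$ with $f(c) = \bar f$. Multiplying by $A$ finishes the proof.

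The only real obstacle is forcing $c$ into the \emph{open} rectangle $(a,b)$ rather than merely into $[a,b]$; the strict inequalities $m < \bar f < M$, together with continuity and the density of $(a,b)$ in $[a,b]$, are precisely what allow the final line-segment argument to stay inside $(a,b)$, whereupon the reasoning reduces to that of Proposition \ref{propintermeiate}.
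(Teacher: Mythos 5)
Your proof is correct. The paper itself gives no proof here, deferring to Courant with ``See e.g.\ \cite[p.\ 292]{courant}''; what you have written is essentially the standard argument that Courant presents: bound the average value $\bar f$ between the minimum $m$ and maximum $M$ (available via Weierstrass since ordinary continuity is assumed here, as you rightly emphasize) and then apply the intermediate value theorem along a path. Two aspects of your write-up deserve explicit credit. First, you do not gloss over the requirement $c \in (a,b)$ rather than merely $c \in [a,b]$: the step where you rule out $\bar f = m$ and $\bar f = M$ (using that a nonnegative continuous function with zero integral vanishes identically) and then pull $p,q$ into the interior via density of $(a,b)$ is exactly what is needed and is often omitted in textbook treatments. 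Second, the line-segment trick you invoke is deliberately the one used in the paper's Proposition \ref{propintermeiate}, so your argument is stylistically consistent with the rest of the paper. No gaps.
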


\begin{proof}
See e.g. \cite[p. 292]{courant}.
\end{proof}

\begin{prop}[The first double fundamental theorem of calculus]\label{firstftc}
Let $a,b \in \R^2$ and $a < b$.
Suppose that $f : [a,b] \to \R$ is a double function
which is continuous.
Define $G : [a,b] \to \mathbb{R}$ by 
$G(x) = \iint_{[a,x]} f(x) \ dx_1 dx_2$ for $x \in [a,b]$.
Then $G$ is double differentiable on $[a,b]$ with $G' = f$. 
\end{prop}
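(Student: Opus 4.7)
The plan is to reduce $\Delta_p^q(G)$ to an integral of $f$ over the rectangle spanned by $p$ and $q$, then apply the mean value theorem for double Riemann integrals (Proposition \ref{propcourant}).

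\textbf{Step 1: The additivity identity.} I would first show that for $p,q \in [a,b]$ with $p \leq q$,
\[
\Delta_p^q(G) \;=\; \iint_{[p,q]} f(x)\,dx_1 dx_2.
\]
This follows by expanding the four values $G(q_1,q_2)$, $G(q_1,p_2)$, $G(p_1,q_2)$, $G(p_1,p_2)$ as Riemann integrals over the respective rectangles with corner $a$, and partitioning each such rectangle along the lines $x_1 = p_1$ and $x_2 = p_2$; rectangle additivity of the Riemann double integral then makes every piece outside $[p,q]$ cancel.

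\textbf{Step 2: From additivity to mean slope.} Combining Step 1 with Proposition \ref{propcourant} yields $c^* \in (p,q)$ with $\Delta_p^q(G) = f(c^*)(q_1-p_1)(q_2-p_2)$, so $m_p^q(G) = f(c^*)$ whenever $p < q$ in $[a,b]$.

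\textbf{Step 3: Extension to all quadrants.} Fix $c$ in $[a,b]$ and take $x$ with $x \nsim c$ close to $c$. Depending on the signs of $x_1 - c_1$ and $x_2 - c_2$ there are four cases, and in each of them I would use the symmetries $\Delta_a^b = \Delta_b^a$ and $\Delta_a^b = -\Delta_{(a_1,b_2)}^{(b_1,a_2)}$ from Proposition \ref{subdivision}(a) to rewrite $m_c^x(G)$ as $m_p^q(G)$ where $[p,q]$ is the rectangle with corners $\{c, x\}$ (or $\{(c_1,x_2),(x_1,c_2)\}$ in the mixed-sign cases); the double denominators and the signs from the symmetries match up, so in every case Step 2 gives $m_c^x(G) = f(c^*)$ for some $c^* \in (p,q)$.

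\textbf{Step 4: Passage to the limit.} In each case $c^*$ lies in the open rectangle spanned by $c$ and $x$, so $\|c^* - c\|$ tends to $0$ as $x \leadsto c$. Since $f$ is (ordinarily) continuous at $c$, we conclude $m_c^x(G) = f(c^*) \to f(c)$, so $G'(c)$ exists and equals $f(c)$. For boundary points of $[a,b]$ the signed double derivatives are obtained by restricting $x$ to the appropriate quadrant of $c$; the same four-case analysis applies verbatim.

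The main obstacle is the bookkeeping in Step 3: correctly matching the sign of $\Delta_c^x(G)$ with the sign of the product $(x_1-c_1)(x_2-c_2)$ across the four quadrant configurations, so that Step 2 can be applied uniformly. Everything else is then routine given Propositions \ref{propertiesintegral} and \ref{propcourant} and the continuity of $f$.
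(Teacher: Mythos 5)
Your proof is correct and follows essentially the same route as the paper's: reduce $\D_p^q(G)$ to $\iint_{[p,q]} f$, apply the mean value theorem for double Riemann integrals (Proposition \ref{propcourant}), handle the four quadrant configurations separately, and pass to the limit using the ordinary continuity of $f$. The paper leaves your Step 1 (the additivity identity $\D_p^q(G) = \iint_{[p,q]} f\,dx_1\,dx_2$, derived by inclusion--exclusion from rectangle additivity of the Riemann integral) implicit, simply writing $m_x^{x+h}(G)$ as the normalized integral over the small rectangle in each of the four cases; you make that step, and the sign bookkeeping via Proposition \ref{subdivision}(a), explicit, which is a modest clarification rather than a different method. (One small slip: your closing reference to Proposition \ref{propertiesintegral} is out of place, since that result concerns double Newton integrals, not Riemann integrals; the additivity you actually need and correctly invoke in Step 1 is the standard Riemann one.)
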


\begin{proof}
Take $x \in [a,b]$. 
We consider four cases.

Case 1: $x < b$. 
Take $h \in \R_{++}^2$ such that $x+h < b$.
By Proposition \ref{propcourant}:
\[
m_x^{x+h}(G) = 
\frac{ \iint_{[x,x+h]} f(x) \ dx_1 dx_2 }{h_1 h_2} = f(t)
\]
for some $t \in (x,x+h)$.
Letting $h \to 0^{++}$ yields that $G'_{++}(x) = f(x)$.

Case 2: $a < x$.
Take $h \in \R_{--}^2$ such that $a < x+h$.
By Proposition \ref{propcourant}:
\[
m_x^{x+h}(G) = 
\frac{ \iint_{[x+h,x]} f(x) \ dx_1 dx_2 }{(-h_1)(-h_2)} = f(t)
\]
for some $t \in (x+h,x)$.
Letting $h \to 0^{++}$ yields that $G'_{--}(x) = f(x)$.

Case 3: $(a_1,x_2) < (x_1,b_2)$.
Take $h  \in \R_{-+}^2$ such that $x_1+h_1 > a_1$
and $x_2 + h_2 < b_2$. 
By Proposition \ref{propcourant}:
\[
m_x^{x+h}(G) = 
\frac{ \iint_{ [x_1+h_1,x_1] \times [x_2,x_2+h_2] } 
f(x) \ dx_1 dx_2 }{(-h_1)h_2} = f(t)
\]
for some $t \in (x_1+h_1,x_1) \times (x_2,x_2+h_2)$.
Letting $h \to 0^{-+}$ yields that $G'_{-+}(x) = f(x)$.

Case 4: $(x_1,a_2) < (b_1,x_2)$.
Take $h  \in \R_{+-}^2$ such that $x_1+h_1 < b_1$
and $x_2 + h_2 > a_2$. 
By Proposition \ref{propcourant}:
\[
m_x^{x+h}(G) = 
\frac{ \iint_{ [x_1,x_1+h_1] \times [x_2+h_2,x_2] } 
f(x) \ dx_1 dx_2 }{h_1(-h_2)} = f(t)
\]
for some $t \in (x_1,x_1+h_1) \times (x_2+h_2,x_2)$.
Letting $h \to 0^{+-}$ yields that $G'_{+-}(x) = f(x)$.

Cases 1-4 show that $G$ is double differentiable 
on $[a,b]$ with $G' = f$. 
\end{proof}

\begin{prop}[The second double fundamental theorem of calculus]\label{secondftc}
Let $f$ be a double function defined 
on a double interval $[a,b]$ for some 
$a,b \in \R^2$ with $a < b$.
Suppose that
there exists a double primitive function $F$ of $f$
on $[a,b]$. If $f$ is Riemann integrable on
$[a,b]$, then 
$\iint_{[a,b]} f(x) \ dx_1 dx_2 = \int_a^b f$.
\end{prop}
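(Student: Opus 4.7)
The plan is to reduce the claim to a telescoping identity, the double mean value theorem applied on each subrectangle, and the very definition of Riemann integrability. All the hard analytic work has already been done earlier in the paper; what remains is essentially bookkeeping.

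First, I fix an arbitrary partition $P$ of $[a,b]$ with subrectangles $R_{ij}=[x_{1,i-1},x_{1,i}]\times[x_{2,j-1},x_{2,j}]$. Since $F$ is double differentiable on $[a,b]$, it is double continuous there by Proposition \ref{ddiffimpliesdcont}, and it is double differentiable on $\mathrm{int}(R_{ij})$. Hence Proposition \ref{meanvaluethm} supplies, for every pair $(i,j)$, a point $c_{ij}\in\mathrm{int}(R_{ij})$ with
\[
\D_{(x_{1,i-1},x_{2,j-1})}^{(x_{1,i},x_{2,j})}(F)
=F'(c_{ij})\,\D x_{1,i}\,\D x_{2,j}
=f(c_{ij})\,\D x_{1,i}\,\D x_{2,j}.
\]
These $c_{ij}$ will serve as the sample points in a double Riemann sum for $f$.

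Next I verify that summing the double differences of $F$ over all subrectangles telescopes to $\D_a^b(F)$. For fixed $j$, summing over $i$ and using Proposition \ref{subdivision}(b) inductively (or just expanding the definition of $\D$ and telescoping in the first variable) yields
\[
\sum_{i=1}^m \D_{(x_{1,i-1},x_{2,j-1})}^{(x_{1,i},x_{2,j})}(F)
=\D_{(a_1,x_{2,j-1})}^{(b_1,x_{2,j})}(F).
\]
Summing the resulting expression over $j$ and using Proposition \ref{subdivision}(c) analogously in the second variable produces $\D_a^b(F)$. Therefore, for the particular choice of sample points $(c_{ij})$,
\[
R(f,P)=\sum_{i,j} f(c_{ij})\,\D x_{1,i}\,\D x_{2,j}
=\sum_{i,j}\D_{(x_{1,i-1},x_{2,j-1})}^{(x_{1,i},x_{2,j})}(F)
=\D_a^b(F)=\int_a^b f.
\]

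Finally, I invoke the hypothesis that $f$ is Riemann integrable on $[a,b]$: by definition, for every $\epsilon>0$ there exists $\delta>0$ such that $|R(f,P)-\iint_{[a,b]} f(x)\,dx_1dx_2|<\epsilon$ whenever $|P|<\delta$, \emph{regardless} of how the sample points are chosen. Taking a sequence of partitions $P_k$ with $|P_k|\to 0$ and the sample points $c_{ij}$ above, the constant value $R(f,P_k)=\D_a^b(F)$ must coincide with the limit $\iint_{[a,b]} f(x)\,dx_1dx_2$, which gives the desired equality.

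The only place that requires any care is the telescoping step, and even that is essentially immediate from the definition of $\D$; the genuine content of the theorem has been absorbed into Proposition \ref{meanvaluethm} and into the uniformity built into the definition of Riemann integrability.
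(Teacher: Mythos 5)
Your proof is correct and in one respect cleaner than the paper's. The paper takes the same first step: it also applies a mean value theorem on each subrectangle (via Proposition \ref{meannewton}, which is just Proposition \ref{meanvaluethm} applied to $F$) to produce sample points $c_{ij}$ for which the corresponding Riemann sum exactly equals $\int_a^b f = \D_a^b(F)$, using the telescoping from Proposition \ref{propertiesintegral}. The proofs then diverge. The paper compares an \emph{arbitrary} Riemann sum $R$ with this one, and controls $|R - \int_a^b f|$ by an $\epsilon$-estimate that invokes uniform continuity of $f$ on the compact $[a,b]$; but uniform continuity requires $f$ to be continuous, which is not among the stated hypotheses (the existence of a double primitive does not force continuity of $f$). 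Your version instead appeals only to the definition of Riemann integrability: since for every sufficiently fine partition there is a legal choice of sample points making $R(f,P)$ exactly $\int_a^b f$, and since the Riemann sums converge to $\iint_{[a,b]} f$ uniformly over all sample-point choices, the two numbers must coincide. This is shorter, uses only the stated hypotheses, and quietly repairs the continuity gap in the paper's own argument. The one step worth making explicit in your write-up is the justification that $F$ satisfies the hypotheses of Proposition \ref{meanvaluethm} on each $R_{ij}$, including double differentiability (in the appropriate signed sense) on the boundary pieces of $R_{ij}$ that lie on the boundary of $[a,b]$; you do mention Proposition \ref{ddiffimpliesdcont}, which handles the double continuity part, and the rest is routine given the paper's convention on signed differentiability at boundary points.
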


\begin{proof}
Take $\epsilon \in \R_+$ and put $I := \int_a^b f$. 
Since $[a,b]$ is a compact interval, $f$ is 
uniformly continuous on $[a,b]$
(see e.g. \cite[Theorem 4.19]{rudin1976}).
Therefore, there exists $\delta \in \R_+$ such that:
\[
|f(c)-f(d)| < \frac{\epsilon}{ (b_1-a_1)(b_2-a_2) }
\]
whenever $c,d \in [a,b]$ and 
$\sqrt{( c_1-d_1 )^2 + ( c_2-d_2 )^2} < \delta$.
Consider a fixed double Riemann sum:
\[
R := \sum_{i=1}^m \sum_{j=1}^n 
f( x_{1,i,j}^* , x_{2,i,j}^* ) \D x_{1,i} \D x_{2,j}
\]
defined by a partition $P$, with $|P| < \delta$, 
and a choice of points 
$(x_{1,i,j}^* , x_{2,i,j}^*)$ in the corresponding 
rectangles. We wish to show that $| R - I | < \epsilon$.
By Proposition \ref{meannewton} there exist  
$c_{i,j} \in (x_{1,i-1},x_{1,i}) \times (x_{2,j-1},x_{2,j})$ 
with:
\[ \int_{( x_{1,i-1},x_{2,j-1} )}^{ ( x_{1,i},x_{2,j} ) } f = 
f(c_{i,j}) \D x_{1,i} \D x_{2,i} 
\]
for $i=1,\dots,m$ and $j=1,\ldots,n$.
By repeated application of Proposition 
\ref{propertiesintegral}(b)(c) we therefore get that:
\[
I = 
\sum_{i=1}^m \sum_{j=1}^n 
\int_{( x_{1,i-1},x_{2,j-1} )}^{ ( x_{1,i},x_{2,j} ) } f =
\sum_{i=1}^m \sum_{j=1}^n 
f(c_{i,j}) \D x_{1,i} \D x_{2,i} 
\]
which in turn implies that:
\[
\begin{array}{rcl}
\displaystyle{ \left| R - I \right| } &=& 
\displaystyle{
\left| \sum_{i=1}^m \sum_{j=1}^m 
( f(x_{1,i,j}^* , x_{2,i,j}^*) - f(c_{i,j}) )  
\D x_{1,i} \D x_{2,i}  \right| } \\[20pt]
& \leq & \displaystyle{ \sum_{i=1}^m \sum_{j=1}^m 
\left| f(x_{1,i,j}^* , x_{2,i,j}^*) - f(c_{i,j}) \right|  
\D x_{1,i} \D x_{2,i} } \\[15pt]
& < & \displaystyle{ \frac{\epsilon}{ (b_1-a_1)(b_2-a_2) }
\sum_{i=1}^m \sum_{j=1}^m \D x_{1,i} \D x_{2,i} }  \\[15pt]
& = & \displaystyle{ 
 \frac{\epsilon}{ (b_1-a_1)(b_2-a_2) } \cdot
 (b_1-a_1)(b_2-a_2)  = \epsilon. } \\[10pt]
\end{array}
\]
We have now shown that 
$\iint_{[a,b]} f(x) \ dx_1 dx_2 = \int_a^b f$.
\end{proof}

Let $f$ be a double function defined 
on a double \emph{open} interval $(a,b)$ for some 
$a,b \in \overline{\R}^2$ with $a < b$.
Suppose that
there exists a double primitive function $F$ of $f$ on $(a,b)$. 
If the double signed limit:
\begin{equation}\label{doublelimit}
\lim_{ y \to b^{--}; \ x \to a^{++} } \D_x^y(F)
\end{equation}
exists, then we say that $\int_a^b f$ is a 
\emph{convergent} improper double Newton \linebreak 
integral with 
\emph{value} equal to the limit (\ref{doublelimit}).
If the limit in (\ref{doublelimit}) does not exist, 
then we say that 
the improper double Newton integral $\int_a^b f$ 
is \emph{divergent}.
Note that if all of the following four signed limits exist:
\[
\begin{array}{lcl}
\displaystyle{ A := \lim_{ x \to b^{--}} F(x) } & &
\displaystyle{ B := \lim_{ x \to a^{++}} F(x) } \\[15pt]
\displaystyle{ C := \lim_{ x \to (b_1,a_2)^{-+}} F(x) } & &
\displaystyle{ D := \lim_{ x \to (a_1,b_2)^{+-}} F(x) }
\end{array}
\]
then $\int_a^b f$ is convergent with value equal to
$A+B-C-D$.

\begin{exa}
(a) Suppose that $F$ and $f$ are the double functions
defined on $I :=(0,1) \times (0,1)$ by
$F(x) = (x_1+x_2) \ln(x_1+x_2)$ and 
$f(x) = 1/(x_1+x_2)$ for $x \in I$. 
By Proposition \ref{schwarztheorem},
$F$ is double differentiable on $I$ with $F' = f$. 
Since
$\lim_{ x \to (1,1)^{--}} F(x_1,x_2) = 2 \ln(2)$ and
$\lim_{ x \to (0,0)^{++}} F(x_1,x_2) = 
\lim_{ x \to (1,0)^{-+}} F(x_1,x_2) = 
\lim_{ x \to (0,1)^{+-}} F(x_1,x_2) = 0$
the improper double Newton integral
$\int_{(0,0)}^{(1,1)} 1/(x_1+x_2)$
is convergent with value equal to
$2 \ln(2)$.

(b) Suppose that $F$ and $f$ are the double functions
defined on $I :=(0,1] \times (0,1]$ by
$F(x) = x_1/(x_1+x_2)$ and 
$f(x) = (x_1-x_2)/(x_1+x_2)^3$ for $x \in I$. 
By Proposition \ref{schwarztheorem},
$F$ is double differentiable on $I$ with $F' = f$. 
However, since
$\lim_{s \to 0^+} \D_{(s,s)}^{(1,1)}(F) = 0$ and
$\lim_{t \to 0^+} \D_{(t,2t)}^{(1,1)}(F) = -1/6$
the improper double Newton integral
$\int_{(0,0)}^{(1,1)} (x_1-x_2)/(x_1+x_2)^3 $
is divergent.
\end{exa}

Many standard calculus textbook problems concerning double 
integrals over non-rectangular regions 
are solved by iterated integration.
If, however, the region in question can 
be mapped bijectively onto a double interval,
then such integrals can instead be considered as 
improper double Newton integrals. In fact,
by Proposition \ref{secondftc} and the result 
in \cite{schwartz1954} we
get the following:

\begin{prop}
Let $a,b \in \overline{\R}^2$ with $a < b$.
Suppose that $D$ is an open subset of $\R^2$ and that
$h : (a,b) \to D$ is a bijection such that $h$ and $h^{-1}$
are continuous and have continuous partial derivatives.
Let $J(x)$ denote the absolute value 
of the Jacobian determinant of $h$ at $x \in (a,b)$.
If $f : D \to \R$ is a function which is 
integrable on $D$ and $g := (f \circ h) \cdot J$ has 
a double primitive function on $(a,b)$, then 
the improper double Newton integral of $g$ from 
$a$ to $b$ is convergent and
$\iint_{D} f(x) \ dx_1 dx_2 = \int_a^b g$.
\end{prop}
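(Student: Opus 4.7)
The plan is to combine the Second Double Fundamental Theorem of Calculus (Proposition~\ref{secondftc}) with the change-of-variables theorem of~\cite{schwartz1954}, and then pass to the signed double limit in the two corners.

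For any $x,y \in (a,b)$ with $x < y$, the double interval $[x,y]$ is a compact sub-interval of $(a,b)$. Since $h$ has continuous partial derivatives, $J$ is continuous on $(a,b)$, and the integrability of $f$ on $D$ yields Riemann integrability of $g = (f \circ h) \cdot J$ on every such $[x,y]$. Because the restriction of the given double primitive $G$ is still a primitive of $g$ on $[x,y]$, Proposition~\ref{secondftc} gives
\[
\iint_{[x,y]} g(u)\,du_1\,du_2 \;=\; \int_x^y g \;=\; \D_x^y(G).
\]
By the Schwartz change-of-variables theorem the left-hand side equals $\iint_{h([x,y])} f(u)\,du_1\,du_2$. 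Because $h$ is a homeomorphism, the images $h([x,y])$ form an exhaustion of $D$ by compact sets as $x \to a^{++}$ and $y \to b^{--}$, and integrability of $f$ over $D$ then forces $\iint_{h([x,y])} f \to \iint_D f$. Hence $\D_x^y(G) \to \iint_D f$, which is precisely the assertion that the improper double Newton integral $\int_a^b g$ is convergent with value $\iint_D f$.

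The main technical point I expect to have to handle carefully is verifying that the convergence occurs in the genuine signed-double-limit sense of~(\ref{doublelimit}), and not merely along some particular exhausting sequence. Given $\varepsilon > 0$, first choose a compact $K \subseteq D$ such that $|\iint_{K'} f - \iint_D f| < \varepsilon$ for every compact $K'$ with $K \subseteq K' \subseteq D$. Then $h^{-1}(K)$ is compact in the open set $(a,b)$, so it lies inside some compact double sub-interval $[x_0,y_0] \subseteq (a,b)$. Choosing $\delta \in \R_{++}^2$ so that $P_{++}(a,\delta) \subseteq (a,x_0)$ and $P_{--}(b,\delta) \subseteq (y_0,b)$, any admissible pair $x \in P_{++}(a,\delta)$ and $y \in P_{--}(b,\delta)$ satisfies $K \subseteq h([x_0,y_0]) \subseteq h([x,y])$, and the $\varepsilon$-estimate transfers to $\D_x^y(G)$, delivering the uniform control required by the definition of the improper double Newton integral.
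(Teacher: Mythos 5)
Your argument is correct and is essentially the proof the paper intends: the paper supplies no written proof of this proposition, only the remark that it follows from Proposition~\ref{secondftc} together with the change-of-variables result of \cite{schwartz1954}, and your write-up fills in exactly those two ingredients plus the passage to the signed double limit required by the definition of the improper double Newton integral. The one step you state without justification is the Riemann integrability of $g=(f\circ h)\cdot J$ on each compact $[x,y]\subset(a,b)$; this is fine provided ``integrable on $D$'' is read in a sense at least as strong as Riemann integrable on compacts (as Lebesgue's criterion is preserved under a $C^1$ diffeomorphism with $C^1$ inverse), but it is worth flagging that the proposition's hypothesis on $f$ is left somewhat imprecise in the paper itself.
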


\begin{exa}
(a) We wish to evaluate $\iint_D f(x) \ dx_1 dx_2$
where $f(x) = x_1 x_2$ for $x \in D$, and $D$ is 
the interior of the triangle with vertices 
$(0,0)$, $(1,0)$ and $(1,1)$.
Define $h : (0,1) \times (0,1) \to D$ by
$h(u,v) = (u,uv)$. Then $J(u,v) = u$ and 
$\iint_D f(x) \ dx_1 dx_2 = 
\int_{(0,0)}^{(1,1)} u \cdot uv \cdot u =
\int_{(0,0)}^{(1,1)} u^3 v = 
\D_{(0,0)}^{(1,1)} ( u^4 v^2/8 ) = 1/8$.

(b) We wish to evaluate $\iint f(x) \ dx_1 dx_2$
where $f(x) = 1/(x_1+x_2)^2$ for $x \in D$, and 
$D$ is defined by $0 < x_1 < 1$ and $0 < x_2 < x_1^2$.
Define $h : (0,1) \times (0,1) \to D$ by
$h(u,v) = (u,u^2 v)$ for $u,v \in (0,1)$.
Then $J(u,v) = u^2$ and 
$\iint_D f(x) \ dx_1 dx_2 = 
\int_{(0,0)}^{(1,1)} u^2/(u + vu^2)^2 =
\int_{(0,0)}^{(1,1)} 1/(1+uv)^2 = $ \linebreak
$\D_{(0,0)}^{(1,1)} ( \ln(1+uv) ) = \ln(2)$.

(c) We wish to evaluate $\iint f(x) \ dx_1 dx_2$
where $f(x) = e^{-x_1^2}$ for $x \in D$, and 
$D$ is defined by $x_1 > 0$ and $-x_1 < x_2 < x_1$. 
Define $h : (0,\infty) \times (-1,1) \to D$ by
$h(u,v) = (u,uv)$ for $u \in (0,\infty)$ and $v \in (-1,1)$.
Then $J(u,v) = u$ and 
$\iint_D f(x) \ dx_1 dx_2 = 
\int_{(0,-1)}^{(\infty,1)} e^{-u^2} u = 
\lim_{s \to \infty} 
\D_{(0,-1)}^{(s,1)} ( -e^{-u^2}v/2 ) = 1$.

(d) We wish to evaluate $\iint f(x) \ dx_1 dx_2$
where $f(x) = 1/(x_1+x_2)$ for $x \in D$, and 
$D$ is defined by $1 < x_1$ and 
$0 < x_2 < 1/x_1$. 
Define $h : (1,\infty) \times (0,1) \to D$ by
$h(u,v) = (u,v/u)$ for $u \in (1,\infty)$ and 
$v \in (0,1)$. Then $J(u,v) = u^{-1}$ and 
$\iint_D f(x) \ dx_1 dx_2 = 
\int_{(1,0)}^{(\infty,1)} (u + v/u)^{-1} \cdot u^{-1} =
\int_{(1,0)}^{(\infty,1)} (u^2+v)^{-1} =
\lim_{t \to \infty; \ s \to 0^+} 
\D_{(1,s)}^{(t,1)} \left( u \ln(u^2+v) + 2 \sqrt{v}
\arctan(u/\sqrt{v}) \right) = \pi/2 - \ln(2)$.
\end{exa}

\section{Triple calculus, quadruple calculus and beyond}\label{sec:triplecalculus}

It is easy to work out the corresponding difference 
operators in higher dimensions. 
Namely, if $a,b \in \R^n$ and $f$ is an 
$n$-variable function, then:
\[
\D_a^b(f) \ \ = \ \sum_{s \in \{ 0,1 \}^n } (-1)^s
f( s_1 a_1 + (1-s_1) b_1, \ldots , s_n a_n + (1-s_n)b_n )
\]
where $(-1)^s := (-1)^{s_1 + s_2 + \cdots + s_n}$.
So, for instance, if $n=3$, then we get that:
\[
\begin{array}{rcl}
\D_a^b(f) &=& f(b_1,b_2,b_3) 
\ - \ f(a_1,b_2,b_3) \ - \ f(b_1,a_2,b_3) \ 
- \ f(b_1,b_2,a_3) \\[5pt]
&+& f(a_1,a_2,b_3) \ + \ f(a_1,b_2,a_3) \ + \ f(b_1,a_2,a_3)
\ - \ f(a_1,a_2,a_3).
\end{array}
\]
In \cite{bogel1934,bogel1935}
higher-dimensional analogues of all the results 
established in this article are shown to hold, that is
there is also a triple calculus, a 
quadruple calculus and beyond, at our disposal.

It seems to the author of the present article
that Bögel did not consider higher-dimensional
versions of Schwarz's theorem.
To be more precise, suppose,
for instance, that $f$ is a five-variable function,
that $f$ is double differentiable with respect
to the first two  variables, with double 
derivative denoted by $f'_{12}$, and that 
$f$ is triple differentiable with respect to the 
last three variables, with triple derivative
denoted by $f'_{345}$. If we also suppose that the iterated
derivatives $(f'_{345})'_{12}$ and 
$(f'_{12})'_{345}$ exist and are double 
respectively triple continuous, does
it then follow that $f$ is quintuple differentiable
with $f'_{12345} = (f'_{345})'_{12} = (f'_{12})'_{345}$?
Since the proof of Proposition \ref{schwarztheorem}
only depends on the mean value theorem in each variable,
it seems reasonable to believe that this proof
is generalizable to higher dimensions if we use
Bögel's higher-dimensional mean value theorem. 

Another  classical result from calculus that neither we
nor B\"{o}gel has considered
is Darboux's theorem. Recall that this result 
states that if a single variable function is differentiable on 
an open interval, then the derivative enjoys the 
intermediate value property on this interval.
It is not clear to the author of the present 
article if there is a double (or higher-dimensional) 
analogue of this result.
Note that the usual text book proof for Darboux's theorem uses 
the Weierstrass extreme value theorem 
(see e.g. \cite{olsen2004}),
which, as we have pointed out earlier, is not at
our disposal for double functions.
However, there are proofs of Darboux’s theorem 
which are based only on the mean value theorem
for differentiable functions and the intermediate 
value theorem for continuous functions
(see loc. cit.). Therefore, it is plausible that
there indeed is  
a double (and higher) version(s) of Darboux's theorem
which is (are) reachable by the methods used in this article.

\end{document}